\theoremstyle{plain}
\newtheorem{theorem}{Theorem}[section]
\newtheorem{observation}[theorem]{Observation}
\newtheorem{corollary}[theorem]{Corollary}
\newtheorem{proposition}[theorem]{Proposition}
\newtheorem{problem}[theorem]{Problem}
\newtheorem{remark}[theorem]{Remark}
\newtheorem{definition}[theorem]{Definition}
\def\lk{\mathit{lk}}
\title{Vertex spanning planar Laman graphs in triangulated surfaces}
\author{Eran Nevo \thanks{
Einstein Institute of Mathematics,
The Hebrew University of Jerusalem, Jerusalem, 91904 Israel.\href
{mailto:nevo@math.huji.ac.il}
{nevo@math.huji.ac.il}. Eran Nevo was partially supported by the Israel Science Foundation grants ISF-1695/15 and ISF-2480/20 and by ISF-BSF joint grant 2016288.}
 \and
 Simion Tarabykin \thanks{
 Einstein Institute of Mathematics,
The Hebrew University of Jerusalem, Jerusalem, 91904 Israel.
\href
 {mailto:simon.trabykin@gmail.com}
 {simon.trabykin@gmail.com}. Simion Tarabykin was partially supported by ISF grant 1695/15.}
}
\date{\today}
\begin{document}
\maketitle
\begin{abstract}
We prove that every triangulation of either of the torus, projective plane and Klein bottle,  contains a vertex-spanning planar Laman graph as a subcomplex. Invoking a result of Kir{\'a}ly, we conclude that every $1$-skeleton of a triangulation of a surface of nonnegative Euler characteristic has a rigid realization in the plane using at most 26 locations for the vertices.
\end{abstract}

\section{Introduction}
A basic object of study in Framework Rigidity is a graph $G=(V,E)$ made of bars and joints in Eucledian space $\mathbb{R}^d$. Such a realization is given by specifying a map $p:V\rightarrow \mathbb{R}^d$. The pair $(G,p)$ is called a \emph{framework}. It is important, for both mathematicians and engineers, to know whether the framework $(G,p)$ is \emph{infinitesimaly rigid}, namely, whether every small enough perturbation of $p$ that preserves all the edge lengths, up to first order, is the restriction to $V$ of some rigid motion of the entire space $\mathbb{R}^d$. A graph $G$ that admits an infinitesimally rigid framework $(G,p)$ is called \emph{$d$-rigid}. If $G$ is $d$-rigid, a generic map $p:V\rightarrow \mathbb{R}^d$ makes $(G,p)$ infinitesimally rigid.

The following question arises: for $G$ $d$-rigid, how small can a subset $A\subseteq \mathbb{R}^d$ be, such that there exists  an infinitesimaly rigid framework $(G,p)$ with $p:V\rightarrow A$? Likewise for a family $F$ of $d$-rigid finite graphs:
Denote by
$c_d(F)$ the minimum cardinality $|A|$ over subsets $A\subseteq \mathbb{R}^d$ satisfying that for every graph $G\in F$ there exists $p:V(G)\rightarrow A$ such that $(G,p)$ is infinitesimally rigid.

Jordan and Fekete~\cite{Fekete-Jordan} showed that for the family $F_1$ of $1$-rigid graphs, namely the connected graphs, $c_1(F_1)=2$, and for $d\ge 2$, the family $F_d$ of $d$-rigid graphs has  $c_d(F_d)=\infty$, namely no such finite $A$ exists; see also~\cite{Adiprasito-Nevo2020}. Let us restrict to the subfamily $F(g)\subseteq F_3$, of $1$-skeleta of triangulations of the surface of genus $g$ (orientable or not). Indeed, a fundamental result of Fogelsanger~\cite{Fogelsanger} asserts that for every $g$, every graph $G\in F(g)$ is $3$-rigid. Adiprasito and Nevo~\cite{Adiprasito-Nevo2020} showed that $c_3(F(g))$ is finite for any fixed genus $g$, and asked whether there exists an absolute constant $c$ such that $c_3(F(g))\le c$ for all $g$. The same question can be asked in the plane:
\begin{problem}\label{prob:c_2(F(g))<c?}
Does there exist an absolute constant $c$ such that $c_2(F(g))\le c$ for all genus $g$?
\end{problem}
Kir{\'a}ly~\cite{Kiraly} showed that $c_2(F(g))=O(\sqrt{g})$. He also
proved that for the family $F(PL)$ of planar Laman graphs, $c_2(F(PL))$ is finite (in fact, at most $26$), answering a question of Whiteley~\cite[Prob.6.4]{Adiprasito-Nevo2020}.
Thus, an answer Yes to Problem~\ref{prob:c_2(F(g))<c?} would follow from an answer Yes to the following problem:
\begin{problem}\label{prob:spanning}
Does every triangulation of a surface (compact, connected, without boundary) admit a vertex spanning planar Laman graph?
\end{problem}
As mentioned, Kir{\'a}ly showed this for the $2$-sphere, denoted $S^2$.
We answer Problem~\ref{prob:spanning} in the affirmative for the surfaces of nonnegative Euler characteristic, by proving a stronger structural-topological result:
\begin{theorem}[Main Theorem]\label{thm:Main} The following holds:

(i) Every triangulation of the projective plane $\mathbb{R}P^2$ contains a vertex spanning disc (as a subcomplex).

(ii) Every triangulation of the Torus $T$  contains a vertex spanning cylinder.

(iii) Every triangulation of the Klein bottle $K$ contains a vertex spanning, planar,
2-dimensional complex; it is either a cylinder,
or a connected sum of two triangulated discs along a triangle\footnote{This connected sum may have at most two edges contained in no triangle face;
deleting them yields a pure complex which is strongly-connected.}.
\end{theorem}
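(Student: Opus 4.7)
My plan is to handle each part by locating a carefully-chosen simple non-contractible edge-cycle in the 1-skeleton (or a pair of such cycles), whose thin simplicial neighborhood has no interior vertices and whose complement is the desired vertex-spanning planar sub-complex. The critical combinatorial condition in each case is that the neighborhood ``strip'' has all of its vertices lying on its boundary.

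For part (i), take any simple non-contractible edge-cycle $C \subset \Delta$, which exists because $\pi_1(\mathbb{R}P^2)\ne 1$ and any non-contractible loop can be reduced to a simple edge-cycle. Such $C$ is necessarily one-sided, so its closed simplicial neighborhood $M$ is a M\"obius band and $D := \overline{\Delta \setminus M}$ is a disc by the Euler-characteristic count. The disc $D$ spans all vertices of $\Delta$ iff $M$ contains no vertex in its interior. I would choose $C$ to minimize length (or some related complexity measure), and then argue that any interior vertex $v$ of $M$ would enable a local rerouting of $C$ through $v$, producing a strictly shorter simple non-contractible cycle --- a contradiction. Part (ii) proceeds analogously: find two disjoint freely homotopic simple non-contractible edge-cycles $C_1,C_2\subset\Delta$ bounding a simplicial annulus $A$ with no interior vertices, so that $\overline{T\setminus A}$ is the desired vertex-spanning cylinder. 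To locate $C_2$, start from a shortest $C_1$ and look for a parallel cycle in its closed neighborhood, again controlling for would-be interior vertices by a minimality/rerouting argument.

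Part (iii) requires a case analysis, reflecting that $K$ admits both one- and two-sided essential simple closed curves. If $\Delta$ contains a non-contractible \emph{two-sided} simple edge-cycle, proceed as in part (ii) to extract a cylinder. Otherwise every non-contractible simple edge-cycle in $\Delta$ is one-sided, and I would pick two such cycles whose homology classes generate $H_1(K;\mathbb{Z}/2)$, then verify that the complement of the union of their M\"obius neighborhoods in $\Delta$ is a connected sum of two triangulated discs glued along a shared triangle, with at most two ``stray'' edges (as in the footnote) arising from the degenerate overlap of the two neighborhoods. The main obstacle throughout is enforcing the no-interior-vertex condition on the strip via the minimality/local-rerouting argument: the rerouted cycle must simultaneously remain simple, stay non-contractible, and decrease the chosen complexity measure. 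This is especially subtle in the Klein bottle case, where the homotopy class of the rerouted cycle has to be tracked precisely to ensure that one-sidedness (or two-sidedness) is preserved through the argument.
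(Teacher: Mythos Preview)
Your plan is entirely different from the paper's, and its central mechanism has a genuine gap.

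The paper does not argue via non-contractible cycles at all. It inducts over vertex splits: every triangulation of $M_g$ arises from one of finitely many \emph{irreducible} triangulations by a sequence of vertex splits, and for $\mathbb{R}P^2$, $T$, $K$ these irreducibles are explicitly classified (2, 21, and 29 of them respectively). In each irreducible $\Delta$ the authors exhibit by inspection a vertex-spanning disc or cylinder $S\subseteq\Delta$ satisfying the combinatorial condition that $S$ meets every $2$-face of $\Delta$ in at least one edge; they then prove an \emph{Extension Theorem} showing that any spanning subsurface with this edge-covering property extends, through an arbitrary vertex split $\Delta\to\Delta'$, to a spanning subsurface $S'\subseteq\Delta'$ of the same homeomorphism type that again enjoys the property. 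Four ``cross-cap'' irreducibles of $K$ admit no such cylinder and are handled by a separate argument using pinched discs and a lemma permitting the sequence of splits to be reordered. If it worked, your direct topological approach would be more conceptual and would not depend on the low-genus classification; but as written it does not.

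The gap is the passage from a shortest non-contractible edge-cycle $C$ to a M\"obius strip $M$ with no interior vertices. If ``closed simplicial neighborhood'' means the union of all closed triangles meeting $C$, then every vertex of $C$ is automatically interior to $M$ (its entire star lies in $M$), so $\overline{\Delta\setminus M}$ can never be vertex spanning. If instead you intend the thin strip $M=\bigcup\{\sigma:\sigma\text{ contains an edge of }C\}$, then $M$ need not be a manifold with boundary: at a vertex $v_i\in C$ whose two $C$-neighbours $v_{i-1},v_{i+1}$ lie at distance at least $3$ along both arcs of the cycle $\lk_\Delta(v_i)$, the four triangles of $M$ through $v_i$ fall into two disjoint fans and $\lk_M(v_i)$ is disconnected. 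Minimality of $|C|$ does not exclude this configuration, since rerouting the length-$2$ path $v_{i-1}v_iv_{i+1}$ through $\lk_\Delta(v_i)$ costs at least three edges and there is no shorter local detour; nor does your proposed rerouting-through-an-interior-vertex apply, because the failure here occurs with no interior vertex present. The same obstruction recurs, compounded, when you attempt to produce two parallel cycles on $T$ or to control the interaction of two one-sided cycles on $K$. So the ``no-interior-vertex via shortest cycle'' mechanism that drives every part of your plan does not run; you would need a substantially different construction of the strip, or a different inductive scheme.
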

For a topological space $M$, denote by $F(M)$ the family of $1$-skeleta of triangulations of $M$.
\begin{corollary}\label{cor:Main}
For $M\in \{T, K, \mathbb{R}P^2, S^2\}$, $c_2(M)\leq 26$.
\end{corollary}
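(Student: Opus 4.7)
The plan is to combine Theorem~\ref{thm:Main} with Kir{\'a}ly's bound $c_2(F(PL))\le 26$ on planar Laman graphs. The case $M=S^2$ is already contained in Kir{\'a}ly's result, so I focus on $M\in\{\mathbb{R}P^2,T,K\}$. Fix a triangulation $\Delta$ of $M$ and let $C\subseteq\Delta$ be the vertex-spanning planar $2$-dimensional subcomplex supplied by Theorem~\ref{thm:Main}.

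The key step is to extract a vertex-spanning \emph{planar} Laman subgraph $L$ from the $1$-skeleton $C^{(1)}$; planarity of $L$ is automatic since $C^{(1)}$ embeds in the plane. It suffices to show that $C^{(1)}$ is generically $2$-rigid, because every generically $2$-rigid graph contains a spanning Laman subgraph. I would prove $2$-rigidity by induction on the number of triangles of $C$, with base case $K_3$. For the inductive step, pick a triangle $T$ whose removal keeps $C$ strongly connected; such a $T$ exists as a leaf of any spanning tree of the dual graph of $C$. A short case analysis on how many edges of $T$ already lie in other triangles of $C\setminus T$ shows that $C^{(1)}$ is obtained from $(C\setminus T)^{(1)}$ in one of three ways: trivially (all three edges of $T$ are present), by adding a single edge between two existing vertices, or by a Henneberg $0$-extension that introduces one new vertex with its two incident edges. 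All three operations preserve generic $2$-rigidity. For the Klein-bottle connected-sum exception in Theorem~\ref{thm:Main}(iii), the footnote allows up to two edges contained in no triangle on top of a strongly connected pure subcomplex; these are harmless further edge additions.

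With $L\subseteq C^{(1)}\subseteq\Delta^{(1)}$ a planar Laman spanning subgraph, Kir{\'a}ly's theorem yields a set $A\subseteq\mathbb{R}^2$ with $|A|\le 26$ and a map $p\colon V(L)\to A$ making $(L,p)$ infinitesimally rigid. Since $V(L)=V(\Delta)$, the same $p$ serves as a realization of the whole $1$-skeleton $\Delta^{(1)}$, and infinitesimal rigidity is monotone under edge addition (extra rigidity-matrix rows cannot decrease the rank). Thus $(\Delta^{(1)},p)$ is infinitesimally rigid, yielding $c_2(F(M))\le 26$. The hardest point I expect is the inductive step in the connected-sum case of Theorem~\ref{thm:Main}(iii), where $C$ fails to be a manifold-with-boundary: one must confirm that a leaf triangle preserving strong connectedness can always be chosen and that the above three reduction types still exhaust all possibilities in the presence of the shared triangle and the loose edges.
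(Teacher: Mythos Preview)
Your argument is correct and follows the paper's route exactly: from the spanning planar subcomplex of Theorem~\ref{thm:Main}, pass to its $2$-rigid $1$-skeleton, extract a spanning (necessarily planar) Laman subgraph, and invoke Kir\'aly's bound $c_2(F(PL))\le 26$. The only difference is that where you supply an explicit induction for $2$-rigidity via leaf triangles of the dual tree, the paper simply cites the standard fact that every strongly connected pure $d$-complex has a $d$-rigid $1$-skeleton (Kalai); your induction is fine, though your three-case list should also allow the case in which the leaf $T$ shares only one edge with $C\setminus T$ yet its third vertex is already present (so two edges, not one, are added between existing vertices)---this case cannot occur when $C$ is a surface with boundary and in any event adding edges preserves $2$-rigidity, so the omission is harmless and your flagged worry about the connected-sum case dissolves for the same reason.
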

To see that Theorem~\ref{thm:Main} implies Corollary~\ref{cor:Main}, note two facts: (i) all the vertex spanning subcomplexes in  Theorem~\ref{thm:Main} have a $2$-rigid graph (indeed, clearly all strongly-connected pure $d$-dimensional simplicial complexes have a $d$-rigid $1$-skeleton, see e.g.~\cite[Lem.6.2]{Kalai:LBT}), thus each of them contains a minimal $2$-rigid, namely Laman, spanning subgraph; and (ii) these Laman graphs are planar -- this is clear as the $2$-dimensional subcomplexes containing them are themself planar.
Now apply
Kir{\'a}ly's result that $c_2(F(PL))\le 26$~\cite{Kiraly}.

The basic idea in the proof of Theorem~\ref{thm:Main} is to use induction over \emph{vertex splits}:
first we find a suitable spanning subsurface in each \emph{irreducible triangulation} of $M\in \{T, K, \mathbb{R}P^2\}$, and then \emph{extend} the spanning subsurface along vertex splits. One has to be careful to extend at each vertex split in such a way that  the new spanning subsurface is again \emph{extendible}. This is defined and explained in Section~\ref{sec:split}, see the Extension Theorem~\ref{thm:extention}.

\textbf{Outline.} In Section~\ref{sec:prelim} we give the necessary background on rigidity and on irreducible triangulations, in Section~\ref{sec:split} we prove the Main Theorem~\ref{thm:Main} via proving the Extension Theorem~\ref{thm:extention}, modulo a lemma on rearranging the vertex splits, proved in Section~\ref{sec:rearrange}.
We end with concluding remarks in Section~\ref{sec:conclude}.

\section{Preliminaries}\label{sec:prelim}
\subsection{Rigidity}
Let $G=(V,E)$ be a graph, and let $p:V\rightarrow \mathbb{R}^d$ be a map.
An \emph{infinitesimal motion} of the framework $(G,p)$ is a map $a:V\rightarrow \mathbb{R}^d$ (think of $a$ as an assignment of velocity vectors) such that for all edges $vu\in V$, the following inner product vanishes:
\[ \langle a(v)-a(u), p(v)-p(u)\rangle =0.
\]
The motion $a$ is \emph{trivial} if the relation above is satisfied for every pair of vertices in $V$; otherwise $a$ is \emph{nontrivial}. The framework $(G,p)$ is \emph{infinitesimally rigid} if all its motions are trivial. This definition is equivalent to the one given in the introduction. A graph $G$ admitting such an infinitesimally rigid framework $(G,p)$ is \emph{$d$-rigid}. In that case the subset of maps $p$ such that $(G,p)$ is infinitesimally rigid is Zariski dense in the space $\mathbb{R}^{d|V(G)|}$ of all maps $q:V(G)\rightarrow \mathbb{R}^{d}$. The readers may consult e.g.~\cite{Connelly:RigiditySurvey, graver1993combinatorial} for further background on rigidity.


\subsection{Irreducible triangulations}
Let $vu$ be an edge in a graph $G=(V,E)$. Contract $v$ to $u$ to obtain the graph $G'=(V-v,E')$, so $E'=(E\setminus \{wv:\ wv\in E\})\cup \{wu:\ wv\in E, \ w\neq u\}$. This operation is called an \emph{edge contraction} at $vu$.
The inverse operation, that starts with $G'$ and produces $G$ is called a \emph{vertex split} at $u$.
Similarly one defines edge contraction (and vertex split) for simplicial complexes: replace the faces of the form $F\cup\{v\}$ in a simplicial complex $\Delta$ ($v\notin F$) by $F\cup\{u\}$ (and remove duplicates if they appear) to obtain a new simplicial complex $\Delta'$.

A triangulation $\Delta$ of the surface of genus $g$, $M_g$, is \emph{irreducible} if each contraction of an edge of $\Delta$ changes the tolopogy; equivalently, the following combinatorial condition holds: each edge in $\Delta$ belongs to an \emph{empty} triangle $F$ of $\Delta$, namely, $F\notin \Delta$ and its boundary complex $\partial F \subseteq \Delta$. Barnette and Edelson showed:

\begin{theorem}\label{thm:irr-triang}\cite{Barnette-Edelson-NonOrientable, Barnette-Edelson-Orientable}
For all $g$, $M_g$ has finitely many irreducible triangulations.
 \end{theorem}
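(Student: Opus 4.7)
The plan is to prove the statement by establishing an upper bound $n \le f(g)$ on the vertex count of any irreducible triangulation of $M_g$. Once this is in place the theorem is immediate: only finitely many simplicial complexes can be built on a vertex set of size at most $f(g)$, so in particular only finitely many of them can triangulate $M_g$ irreducibly.

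First I would set up the numerics. For a triangulation of $M_g$ with $n$ vertices, $e$ edges, and $f$ triangles, $2e = 3f$ (each edge lies in two triangles) and $n - e + f = \chi(M_g)$, so $e = 3(n - \chi(M_g))$ and $f = 2(n - \chi(M_g))$. Next, reinterpret irreducibility combinatorially: the contraction of an edge $uv$ fails to preserve the topology precisely when $u$ and $v$ share a common neighbor $w$ with $uvw \notin \Delta$, that is, when $uv$ lies in an empty triangle. Equivalently, for every edge $uv$ of an irreducible $\Delta$ one has $|N(u) \cap N(v)| \ge 3$: the two common neighbors coming from the two faces containing $uv$, plus at least one more. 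Summing over edges, the number of empty triangles of $\Delta$ is at least $e/3 = n - \chi(M_g)$. This is the key combinatorial consequence of irreducibility.

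The second and harder step is to turn this lower bound on empty triangles into an upper bound on $n$. Each empty triangle is a $3$-cycle in the $1$-skeleton that does not bound a face, so it carries nontrivial topological information: either it is non-separating, or it separates $M_g$ into subsurfaces of strictly smaller Euler characteristic. The topology of $M_g$ limits the number of pairwise disjoint, homotopically independent such $3$-cycles to $O(g)$. The strategy is therefore to cut along a maximal disjoint family of essential empty triangles, obtaining planar (or lower genus) pieces, and in each piece to show that irreducibility forces a bounded number of vertices --- roughly, inside a disc the empty triangles must ``nest'' in a tree-like pattern controlled by the boundary length, so the $\Omega(n)$ lower bound on empty triangles cannot be sustained once $n$ is large compared to $g$.

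I expect this last local-to-global passage to be the main obstacle: empty triangles can overlap heavily, the same edge may lie in many of them, and the naive counts are too weak. This is the heart of the Barnette--Edelson argument, which proceeds by a careful discharging / peeling analysis; subsequent proofs (Nakamoto--Ota, Gao--Richter--Seymour, Joret--Wood) streamline the bookkeeping and give sharper constants, but the qualitative step is the same. With any bound of the form $n \le C \cdot g$ in hand, the finiteness statement follows at once.
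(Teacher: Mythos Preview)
The paper does not prove this theorem; it merely quotes it as a known result of Barnette and Edelson \cite{Barnette-Edelson-NonOrientable, Barnette-Edelson-Orientable} and then uses the explicit enumerations of irreducible triangulations for $\chi\ge 0$ that were carried out in subsequent work. So there is no ``paper's own proof'' to compare against.

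As for your outline on its own terms: the setup is sound --- the Euler relations, the combinatorial characterisation of irreducibility (every edge in an empty triangle), and the resulting $\Omega(n)$ count of empty triangles are all correct and are indeed the starting point of every known proof. But you have essentially identified the crux and then deferred it: the passage from ``many empty triangles'' to ``bounded $n$'' is the entire content of the theorem, and your sketch of cutting along disjoint essential $3$-cycles and controlling the pieces is only a heuristic. The actual arguments (Barnette--Edelson, and the cleaner later proofs you mention) do not proceed quite this way; for instance, Nakamoto--Ota bound the maximum degree first and then the diameter, while the original Barnette--Edelson argument works through a more delicate analysis of how empty triangles sit in the surface. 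What you have written is a reasonable motivation paragraph, not a proof, and you acknowledge as much in your final paragraph. Since the present paper only needs the statement as a black box, this is not a defect for the purposes of the paper --- but be aware that your proposal stops exactly where the real work begins.
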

When the Euler characteristic $\chi(M_g)\geq 0$, the irreducible triangulations were characterized (up to combinatorial isomorphism) in a series of works:
the $2$-sphere has a unique irreducible triangulation, namely the boundary of the tetrahedron, see e.g. Whiteley~\cite{Whiteley-split} for a proof that all maximal planar graphs are $3$-rigid using this fact; $\mathbb{R}P^2$ has
 two irreducible triangulations, see Barnette~\cite{Barnette1982GeneratingTT}; the torus has
21, see Lavrenchenko~\cite{Lavrenchenko1990IrreducibleTO};
the Klein bottle has 29, see Lavrenchenko-Negami~\cite{Lawrencenko1997IrreducibleTO} and a correction by Sulanke~\cite{Sulanke2006NoteOT}.

We will use these characterizations in the proof of Theorem~\ref{thm:Main}, in the next section.
Note that every triangulation of $M_g$ can be obtained from some irreducible triangulation of $M_g$ by a sequence of vertex splits; each intermediate complex also triangulates $M_g$.

Specifically, let $\Delta$ be a triangulation of $M_g$. A \emph{vertex split} $\Delta\rightarrow \Delta'$ at $v$ is given as follows: decompose the link of $v$, $\lk_v(\Delta)$, which is a cycle,  into two closed intervals in the cyclic order, $[x_1,x_k]$ and $[x_k,x_1]$, where $k\neq 1$. Now replace the star of $v$ in $\Delta$ by the union of two cones over cycles  $v*(x_1,\ldots,x_k,v',x_1)\cup v'*(x_k,\ldots,x_1,v,x_k)$ (where $v'$ is a new vertex) to obtain a triangulation $\Delta'$ of $M_g$.

\section{Extentions}\label{sec:split}
\subsection{Extendible subsurfaces}
Informally, we want the vertex split on a triangulation of $M_g$, $\Delta\rightarrow \Delta'$, to allow an \emph{extension} $S'\subseteq \Delta'$ of the spanning disc/cylinder/etc  $S\subseteq \Delta$; see Theorem~\ref{thm:Main} for the relevant topology of $S$. Formally, for a triangulation $\Delta$ of the surface $M_g$, define:

\begin{definition}\label{def:extendible}
A vertex spanning subsurface (with boundary) $S\subseteq \Delta$ is \emph{extendible} if for every vertex split $\Delta\rightarrow \Delta'$
there exists a subsurface $S'\subseteq \Delta'$ such that either

(i) $S'$ is obtained from $S$ by a vertex split at the same vertex, denote it by $v$.
Namely, the contraction of the edge $vv'$ in $S'$ results in $S$; or

(ii) $S'$ is obtained from $S$ by adding a cone over an interval in the boundary of $S$. (The apex of the cone is the new vertex, in particular not in $S$.)
\end{definition}
Note that such $S'\subseteq \Delta'$ is vertex spanning and homeomorphic to $S$.

\begin{theorem}[Extension Theorem]\label{thm:extention}
Let $\Delta$ triangulate some surface $M_g$ (compact, connected, without boundary), and let $S\subseteq \Delta$ be a vertex
spanning subsurface. Then:

(1) $S$ is extendible in $\Delta$ iff it contains at least one edge
from every triangle in $\Delta$.

(2) Let $\Delta\rightarrow \Delta'$ be a vertex split.
If $S$ is extendible then it has an extendible extension $S'\subseteq \Delta'$.
\end{theorem}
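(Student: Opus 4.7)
My plan is to first prove characterization (1) and then use it to construct the extension in (2); by (1) applied to $\Delta'$, verifying that $S'$ is extendible reduces to checking that every triangle of $\Delta'$ has an edge in $S'$.

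For the necessity direction of (1), I argue by contrapositive. If a triangle $\{u,v,w\}$ has no edge in $S$, then $vu, vw \notin S$ force $\lk_v(S)$ to miss both $u$ and $w$, so $v \in \partial S$ with $\lk_v(S) \subseteq \lk_v(\Delta) \setminus \{u,w\}$, and symmetrically for $u, w$. I consider the vertex split at $v$ with split points $u, w$: here $\lk_v(\Delta')$ is the 3-cycle on $\{u, w, v'\}$ while $\lk_{v'}(\Delta') = \lk_v(\Delta) \cup \{v\}$. In option (i), every neighbor of $v$ in $S$ becomes a neighbor of $v'$ (not $v$) in $\Delta'$, so the required split of $S$ leaves $\lk_v(S') \subseteq \{v'\}$, destroying the 2-dimensional neighborhood at $v$. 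In option (ii), any cone arc $I$ must lie in $\lk_{v'}(\Delta')$; the only edges of $\lk_{v'}(\Delta')$ meeting $v$ are $vu$ and $vw$, both absent from $\partial S$, and any arc avoiding $v$ cannot be consistently placed on $\partial S$ near $v$. Hence no extension exists.

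For sufficiency and the construction in (2), given a split at $v$ with split points $x_1, x_k$, I case-split on how $\lk_v(S)$ meets the two arcs $[x_1, x_k]$ and $[x_k, x_1]$: (a) $v$ interior in $S$: perform the same 2-point split in $S$ as in $\Delta$ (option (i)); (b) $v \in \partial S$ with $\lk_v(S) = [x_a, x_b]$ entirely inside one side of the split, WLOG $[x_a, x_b] \subseteq [x_1, x_k]$: use option (ii), where the triangle condition forces each edge $x_i x_{i+1}$ on the opposite arc with both $x_i, x_{i+1} \notin \lk_v(S)$ to be in $S$, and since $\{v, x_i, x_{i+1}\} \notin S$ each such edge lies on $\partial S$; these edges, possibly extended by the boundary edges $vx_1$ or $vx_k$ when $\lk_v(S)$ meets a split point as endpoint, assemble into an arc $I \subseteq \partial S \cap \lk_{v'}(\Delta')$, and the cone $v' * I$ realizes option (ii); (c) $\lk_v(S)$ contains exactly one split point in its interior: a 1-point split of $v$ in $S$ at that point realizes option (i), and the single new triangle it creates is one of the two new $\Delta'$-triangles; (d) both split points lie in the interior of $\lk_v(S)$: a 2-point split in $S$ realizes option (i), making $v$ interior in $S'$ while $v'$ becomes a boundary vertex whose link is the two outer sub-arcs of $\lk_v(S)$ joined through $v$ via the two new triangles.

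For (2), the $S'$ constructed above is a vertex-spanning subsurface of $\Delta'$ homeomorphic to $S$, and I verify it satisfies the triangle-edge condition of (1) in $\Delta'$, so that (1) again gives its extendibility. Triangles outside the star of $\{v, v'\}$ are unchanged and inherit their $S$-edges. Each old star-triangle $\{v, x_i, x_{i+1}\}$ becomes either $\{v, x_i, x_{i+1}\}$ or $\{v', x_i, x_{i+1}\}$ in $\Delta'$ and transports its $S$-edge to an $S'$-edge under the construction. The two new triangles $\{v, v', x_1\}$ and $\{v, v', x_k\}$ are checked individually, each yielding an $S'$-edge from $\{vv', vx_1, vx_k, v'x_1, v'x_k\}$ by construction. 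The main obstacle is case (b): verifying that the forced boundary edges assemble into a single connected arc of $\partial S$, and handling endpoint cases where $\lk_v(S)$ touches a split point so that $I$ threads through $v$. This reduces to tracking $\partial S$ at each interior vertex $x_i$ of the arc, where the two consecutive forced edges at $x_i$ pin down its two boundary edges.
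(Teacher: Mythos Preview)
Your approach is essentially the paper's: a case analysis on the position of the split points $x_1,x_k$ relative to the fan $\lk_v(S)$, using coning (option~(ii)) when both split points fall outside the fan and a vertex split of $S$ (option~(i)) otherwise, then verifying the triangle--edge condition in $\Delta'$ case by case. The paper phrases the cases as ``$x_1,x_k\in P$'' versus ``$x_1$ or $x_k\in C$'' (where $C$ is the fan and $P$ the complementary forced path), which lines up with your (b) versus (a),(c),(d).

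Two remarks. First, your necessity argument is more tangled than it needs to be. The paper instead splits so that the \emph{new} vertex $x'$ has the small link $\{x,y,z\}$; then for either option one notes uniformly that every edge of $S'\setminus S$ contains $x'$, hence none of $xy,yz,xz$ lies in $S'$, hence no triangle at $x'$ can belong to $S'$ --- done. In your setup (forcing the \emph{old} vertex $v$ to have the small link $\{u,w,v'\}$) the obstruction to option~(ii) is simply that $S\not\subseteq\Delta'$: the edges $vx_i$ for $x_i\notin\{u,w\}$ have disappeared, so $S\cup(v'\!*\!I)$ cannot sit inside $\Delta'$ at all. Your sentence ``any arc avoiding $v$ cannot be consistently placed on $\partial S$ near $v$'' obscures this direct point.

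Second, in case~(b) your ``WLOG $[x_a,x_b]\subseteq[x_1,x_k]$'' hides a genuine asymmetry: if the fan lands on the $v'$-side instead, option~(ii) is unavailable (again because $S\not\subseteq\Delta'$), and you must use option~(i) with all of $\lk_v(S)$ transferred to $v'$. This works only when at least one split point lies in the closed fan arc, so that $v$ retains a two-dimensional neighborhood via a triangle $vv'x_1$ or $vv'x_k$; check that your case division actually guarantees this (it does, once endpoints are absorbed into the appropriate case, but the ``WLOG'' should be justified rather than asserted). The paper is equally informal on this point.
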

In all the irreducible triangulations of $\mathbb{R}P^2$, $T$ and $K$, except for the four so called cross-cap triangulations of the Klein bottle $K$ (see Figure~\ref{fig:4crosscap}), we find an extendible spanning subsurface $S$ to start with: for $\mathbb{R}P^2$ the subsurface $S$ is a disc, and for $T$ and $K$ the subsurface $S$ is a cylinder. See Figure~\ref{fig:SinRP2} for spanning discs in the two irreducible triangulation of $\mathbb{R}P^2$, and see Figure~\ref{fig:SinT} (resp. Figure~\ref{fig:SinK}) for spanning cylinders in the irreducible triangulations of the torus (resp. the non-crosscap irreducible triangulations of Klein's bottle).

\begin{figure}[H]
  \centering
  \includegraphics[width=\textwidth]{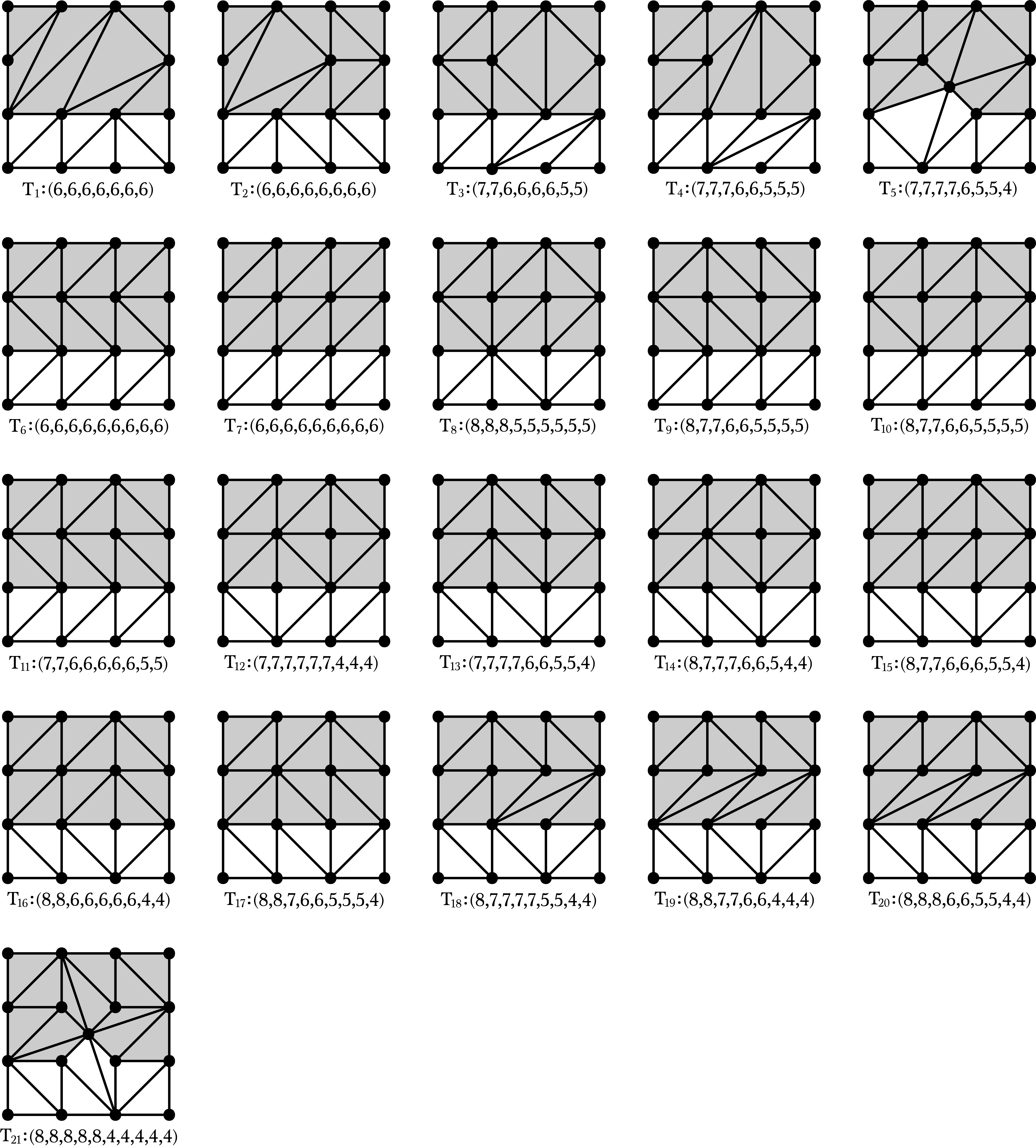}
  \caption{Spanning cylinders, in grey,  in the 21 irreducible triangulations of the torus.}\label{fig:SinT}
\end{figure}

\begin{figure}[H]
  \centering
  \includegraphics[width=\textwidth]{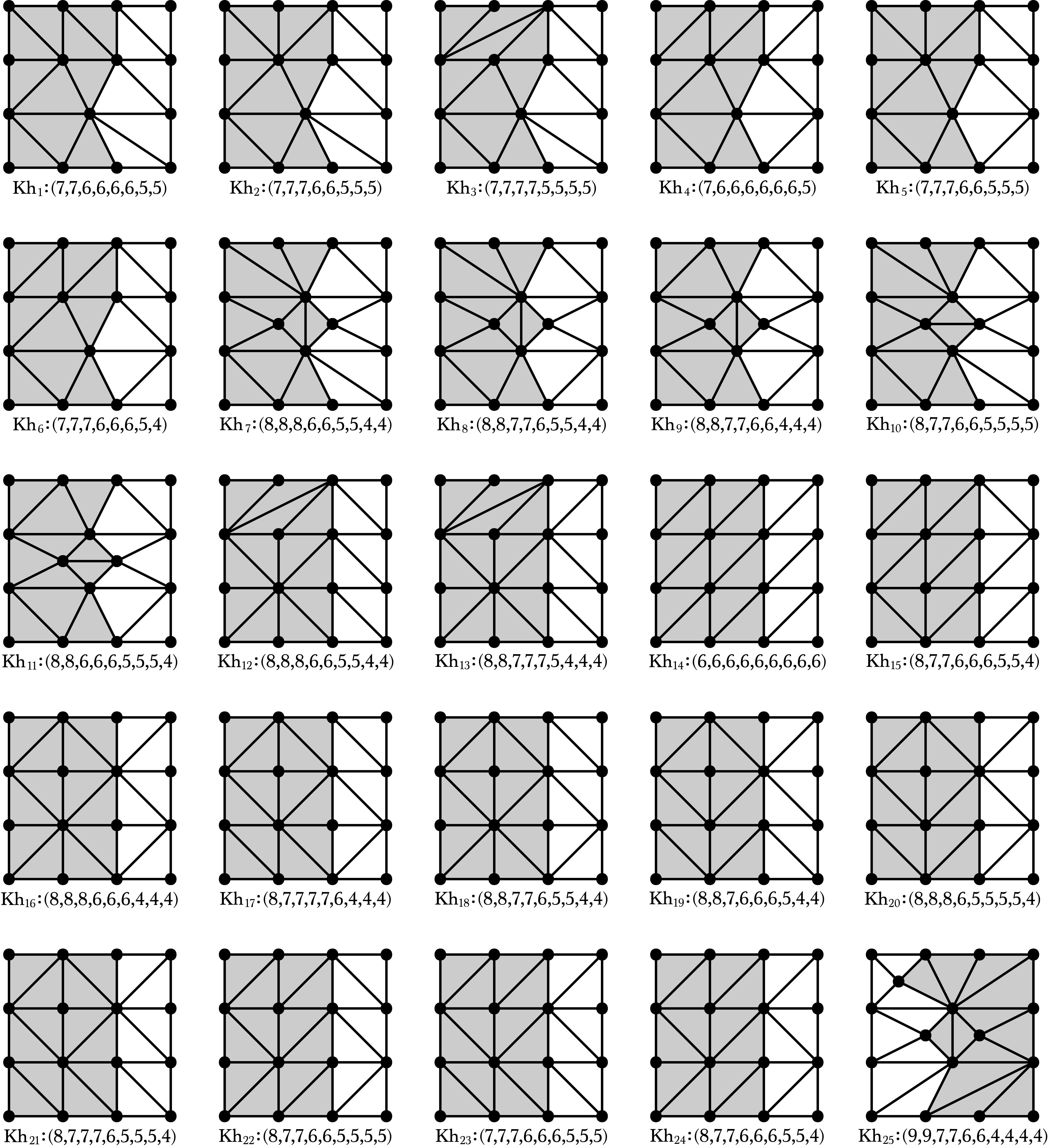}
  \caption{Spanning cylinders, in grey,  in the 25 non-crosscap irreducible triangulations of Klein's bottle.}\label{fig:SinK}
\end{figure}
\begin{figure}[H]
  \centering
  \includegraphics[width=0.6\textwidth]{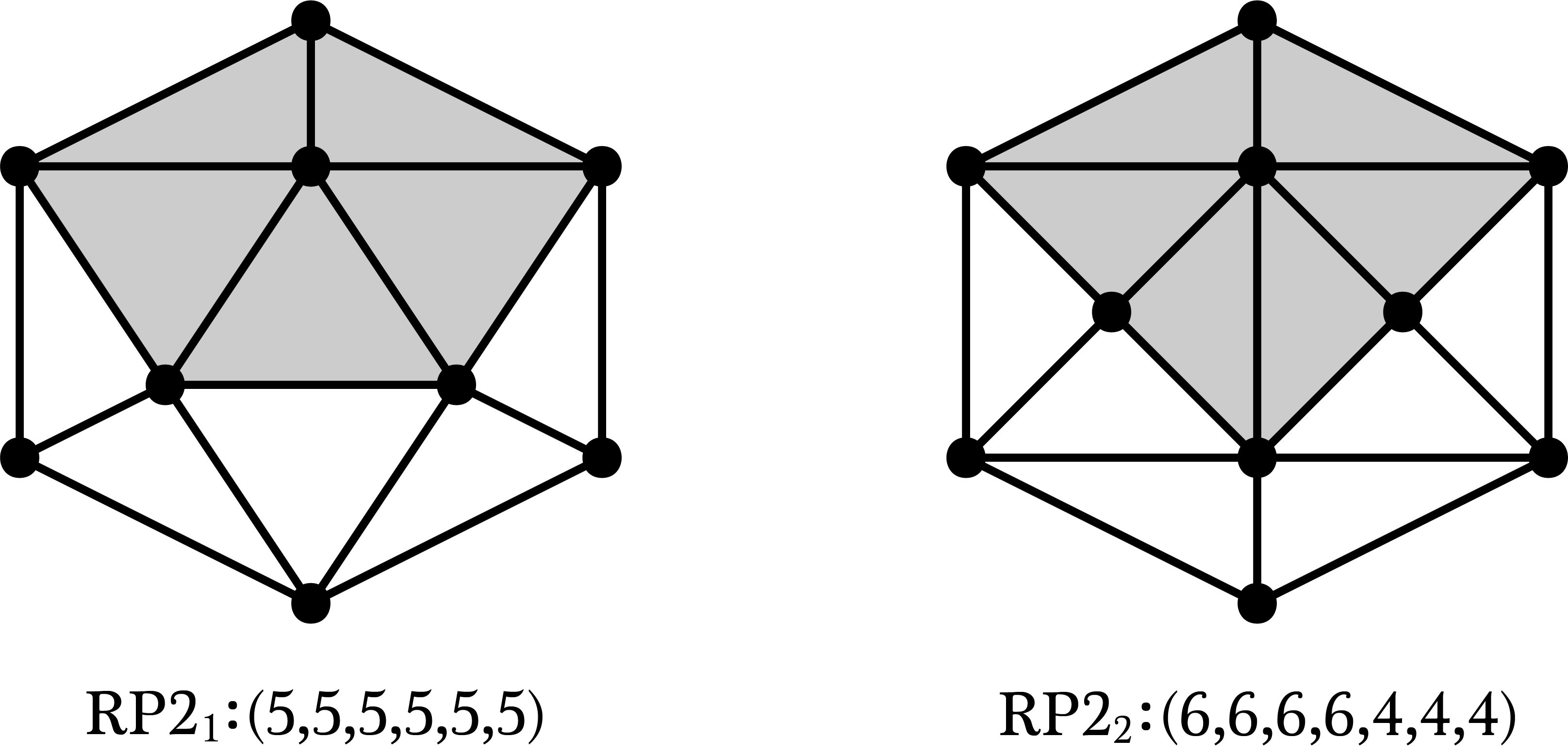}
  \caption{Spanning discs, in grey, in the irreducible triangulations of the projective plane.}\label{fig:SinRP2}
\end{figure}

For a proof of the Extension Theorem~\ref{thm:extention},
the ``only if" part in (1) is easy, see Figure~\ref{fig:only_if} for an illustration.
Indeed, assume by contradiction that $S$ contains no edge of the triangle $xyz\in\Delta$. Perform the vertex split at $x$ such that the new vertex $x'$ has $x,y,z$ as its only neighbors in $\Delta'$. Then all edges in $S'$ that are not in $S$ contain $x'$, hence non of the edges $xy,yz,xz$ is in $S'$. But then $S'$ contains no triangle containing $x'$, a contradiction.

\begin{figure}[H]
  \centering
  \includegraphics[width=\textwidth]{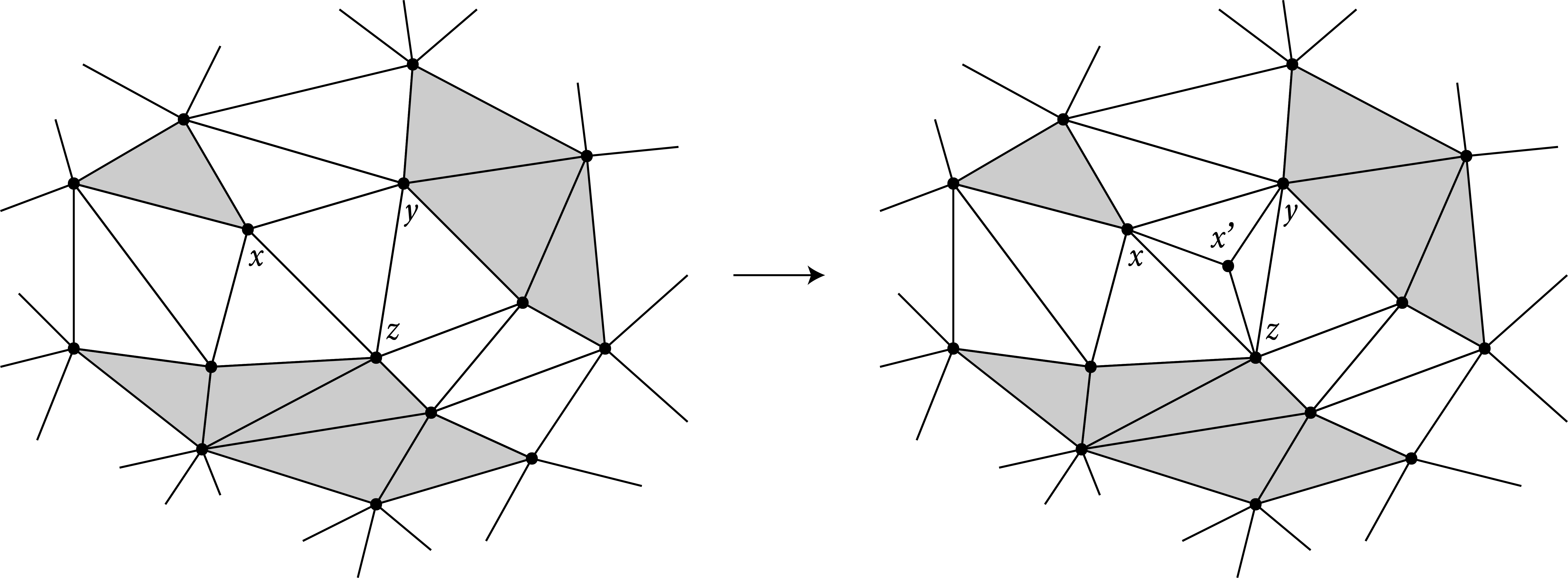}
  \caption{Non extendible subcomplex. On the left, the grey subcomplex is not extendible, as the vertex split with new vertex $x'$ on the right demonstrates.}\label{fig:only_if}
\end{figure}
However, it is the ``if" part that is important to us.
We detail below, and illustrate in figures, how to choose the extendible subsurface $S'$, according to the intersection of the closed star of the split vertex $v$ with $S$ -- either by a vertex split, see Figure~\ref{fig:extend-via-split}, or by coning over a boundary interval, see Figure~\ref{fig:extend-via-coning} for illustrations.


\begin{figure}[H]
  \centering
  \includegraphics[scale=3, width=\textwidth]{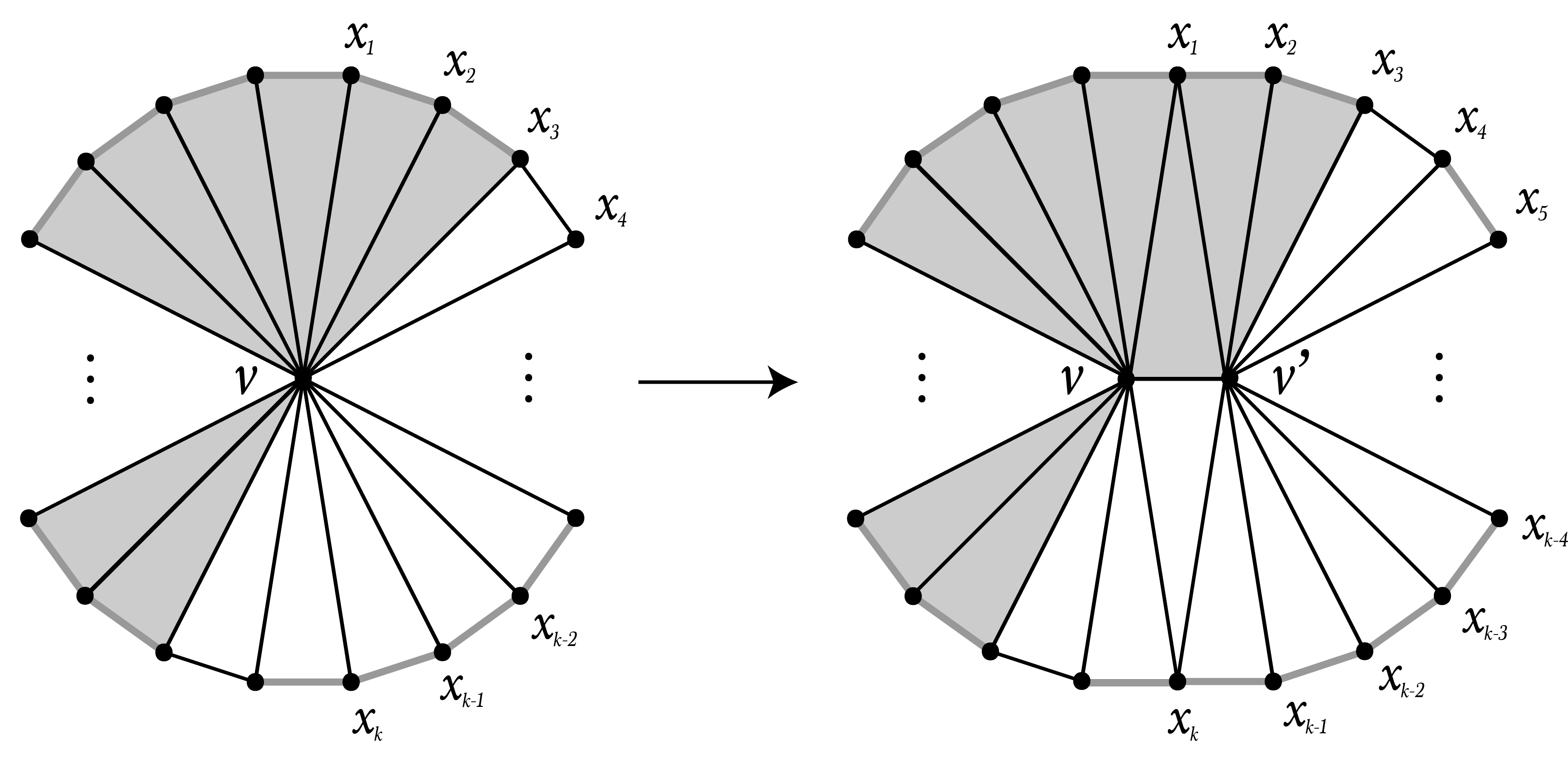}
  
  \caption{Extension $S\rightarrow S'$ via a vertex split. The grey triangles and grey edges are in $S$.}\label{fig:extend-via-split}
\end{figure}

\begin{figure}[H]
  \centering
  \includegraphics[width=\textwidth]{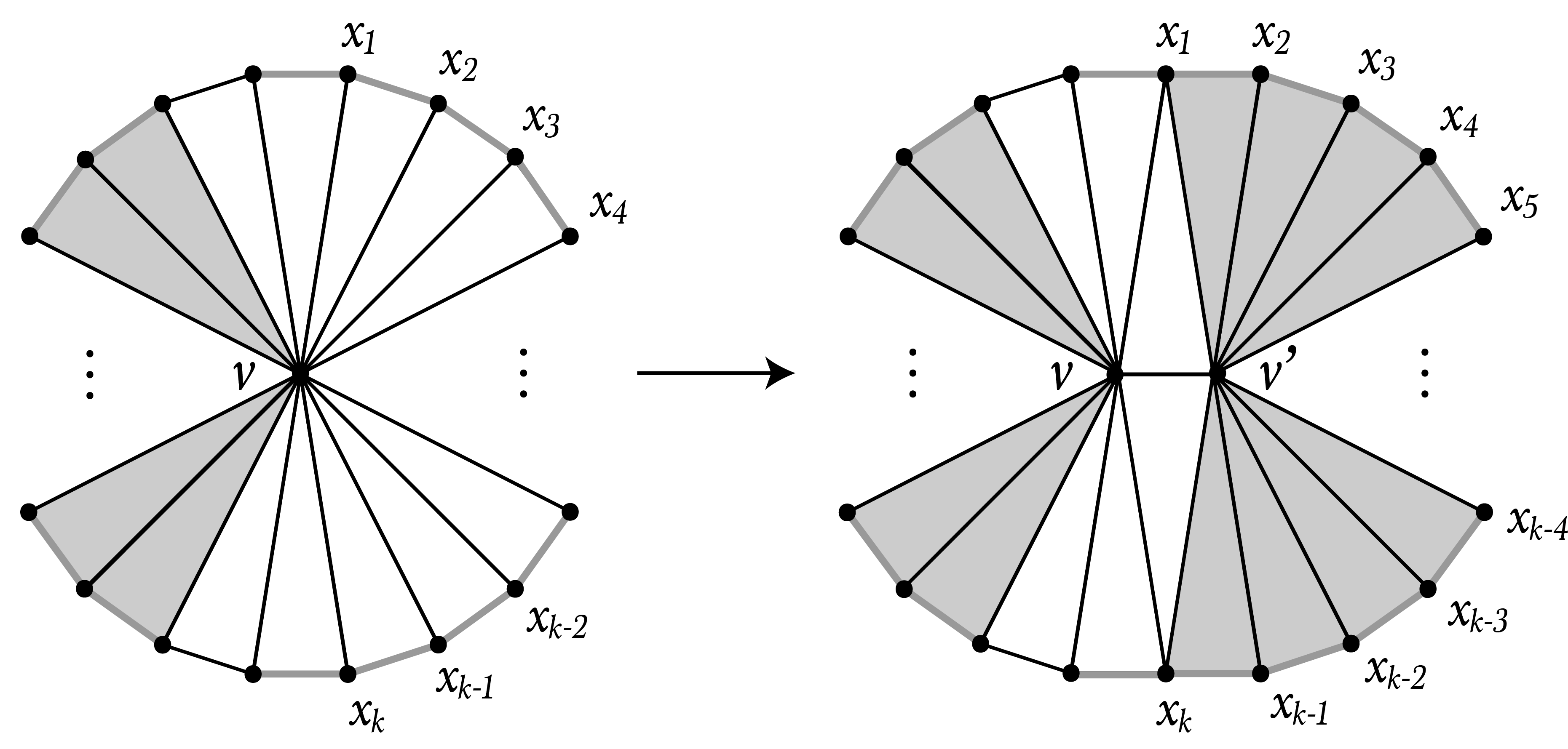}
  \caption{Extension $S\rightarrow S'$ via coning over a boundary interval in the spanning subsurface $S$. The grey triangles and grey edges are in $S$.}\label{fig:extend-via-coning}
\end{figure}

All cases are treated similarly. Let us  describe an exhaustive list of cases:  the intersection of $S$ with the star of $v$, consists of (i) a subcomplex $C$ of consecutive triangles, each of them contains $v$ (this collection is nonempty as $v\in S$, and it may exhaust the star), and of (ii) a path $P$ in the link of $v$, denoted $\lk_v(\Delta)$, such that $C\cup P$ contains all the vertices in the star of $v$ (as $S$ contains an edge from each triangle and is vertex spanning, and $P$ may be empty).
Now the vertex split at $v$ introduces a new vertex $v'\in \Delta'$, and there are exactly two common neighbors $x_1$ and $x_k$ of $v$ and $v'$ in $\Delta'$, both belong to $\lk_v(\Delta)$, and the indices are according to the cyclic order along the link of $v$. We distinguish the following cases:

(1) If $x_1$ and $x_k$ are in $P$
then we cone with $v'$ over the boundary interval $[x_1,x_k]$ in $\lk_v(\Delta)$, as in Figure~\ref{fig:extend-via-coning}, to obtain $S'$ from $S$.
We are left to check that every triangle in $\Delta'$ contains an edge in $S'$. This is clear for all triangles not containing $v'$. For triangles $v'x_ix_{i+1}$ where $1\le i\le k-1$ it follows by the coning construction.
Only two triangles remain: $vv'x_1$ and $vv'x_k$. The first contains $v'x_1$ and the second contains $v'x_1$; both of these edges are in $S'$.

(2) Else we find a suitable vertex split, chosen according to which of $x_1$ and $x_k$ are in $C$, as demonstrated in Figure~\ref{fig:extend-via-split}.
We detail these cases.

Case (2a): $x_1,x_k\in C$. Denote the intersection of $C$ with the link of $v$, which is an interval, by $[x_{k-s},x_{1+t}]$ in the cyclic order, where $s,t\ge 0$.
Then the vertex split at $v$ in $S$ creates exactly the following triangles containing $v'$: $vv'x_1$, $vv'x_k$, and $v'x_i x_{i+1}$ for $i\in [1,t]\cup [k-s,k-1]$. All the other triangles in $\Delta'$ containing $v'$ have an edge in $P\cup\{v'x_{1+t},v'x_{k-s}\}$. We conclude that every triangle in $\Delta'$ contains an edge in $S'$.

Case (2b): $x_1\in C,\  x_k\in P\setminus C$.
(The last case, (2c): $x_k\in C,\  x_1\in P\setminus C$ is treated similarly by symmetry.) Denote the intersection of $C$ with $\lk_v(\Delta)$, which is an interval, by $[x_{k+s},x_{1+t}]$ in the cyclic order, where $s\ge 1,\ t\ge 0$.
Then the vertex split at $v$ in $S$ creates exactly the following triangles containing $v'$: $vv'x_1$, and $v'x_i x_{i+1}$ for $i\in [1,t]$. All the other triangles in $\Delta'$ containing $v'$ has an edge in $P\cup\{v'x_{1+t},v'v\}$. We conclude that every triangle in $\Delta'$ contains an edge in $S'$.
This concludes the proof of the Extension Theorem~\ref{thm:extention}. $\square$

\subsection{Extension for the cross-cap triangulations of the Klein bottle}
To complete the proof of Theorem~\ref{thm:Main} we are left to deal with vertex splits over the four cross-cap triangulations of $K$.
In each of the four, $ABC$ is a noncontractible cycle; see Figure~\ref{fig:4crosscap}.

\begin{figure}[H]
  \centering
  \includegraphics[page=1,width=0.8\textwidth]{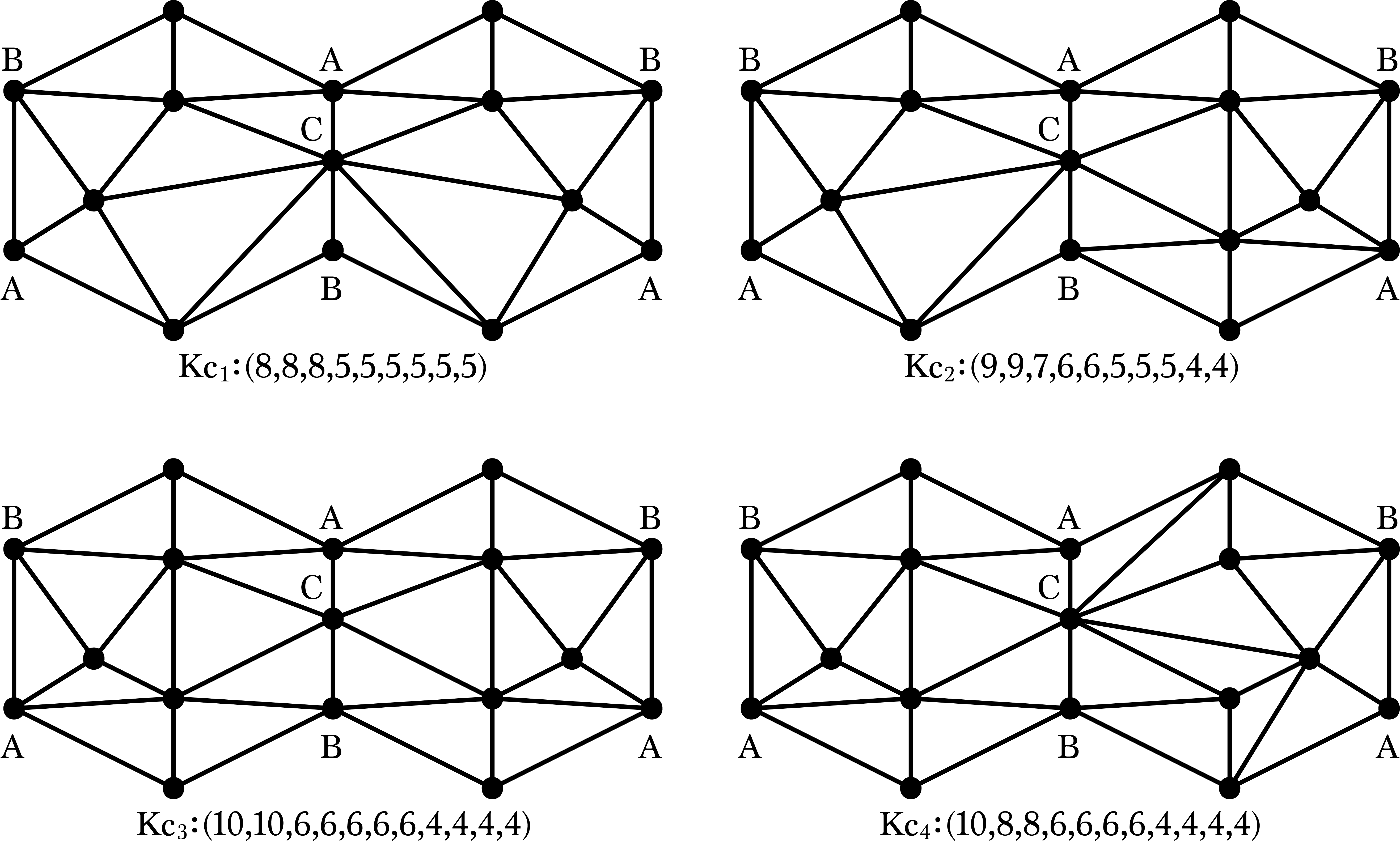}
  \caption{The four cross-cap irreducible triangulations of $K$. The cycle $ABC$ is  noncontractible in each of them.}\label{fig:4crosscap}
\end{figure}
Notice that each of these four triangulations is a connected sum of two $\mathbb{R}P^2$'s along the triangle $ABC$; this triangle can be part of the spanning disc in each $\mathbb{R}P^2$.

Our analysis depends on whether $ABC$ survives the vertex splits or not; namely, on whether the restriction of the vertex splits to $ABC$ is always a $3$-cycle or that for some split it becomes a $4$-cycle. Formally,  split the vertex $A$ (similarly for vertices $B$ and $C$) into two new vertices $A'$ and $A''$, with two common neighbors $x_1$ and $x_k$ numbered in the cyclic order on $\lk_A(\Delta)$, and where $A'x_2$ and $A''x_{k+1}$ are edges in $\Delta'$.
If both $B$ and $C$ are in the interval $[x_1,x_k]$, then the induced complex on $A'BC$ is a $3$-cycle and we name $A'=A$ and say that $ABC$ \emph{survived} the split. Else\footnote{In case $x_1=B$ and $x_k=C$ both $A'BC$ and $A''BC$ are $3$-cycles, and we chose to name $A'=A$ rather than $A''=A$.}, if both $B$ and $C$ are in the complementary closed interval $[x_k,x_1]$,
then the induced complex on $A''BC$ is a $3$-cycle and we name $A''=A$ and say that $ABC$ \emph{survived} the split. Else, the induced complex on $A'A''BC$ is a $4$-cycle and we say that $ABC$ did \emph{not survive} the split.

\textbf{Case 1: $ABC$ survives the vertex splits.} Then the resulted triangulation of $K$ is again a connected sum of two $\mathbb{R}P^2$'s along $ABC$, and $ABC$ can be taken as a triangle in each of the two spanning discs of the two $\mathbb{R}P^2$'s, e.g. by taking the star of $B$ as the spanning disc in either of the two irreducible triangulations of $\mathbb{R}P^2$ (according to the vertex labels in Figure~\ref{fig:4crosscap}) and extend from there along vertex splits.
Then, the connected sum of those two discs is a vertex spanning planar
simplicial complex.

\textbf{Case 2: $ABC$ does not survive the vertex splits.}
Then some vertex split induced also a vertex split of the cycle $ABC$, making it a $4$-cycle.
We will show in Proposition~\ref{prop:ABCsplitsFirst} a reduction to the case where the \emph{first} vertex split induces a split on $ABC$, making it a $4$-cycle.

If $C$ splits first (resp. $A$ or $B$), choose $S$ to be a spanning pinched disc at $C$ (resp. $A$ or $B$); see Figure~\ref{fig:pinched_disc}
(resp. Figure~\ref{fig:pinched_disc-A} or
Figure~\ref{fig:pinched_disc-B}).
\begin{figure}[H]
  \centering
\includegraphics[width=0.7\textwidth]{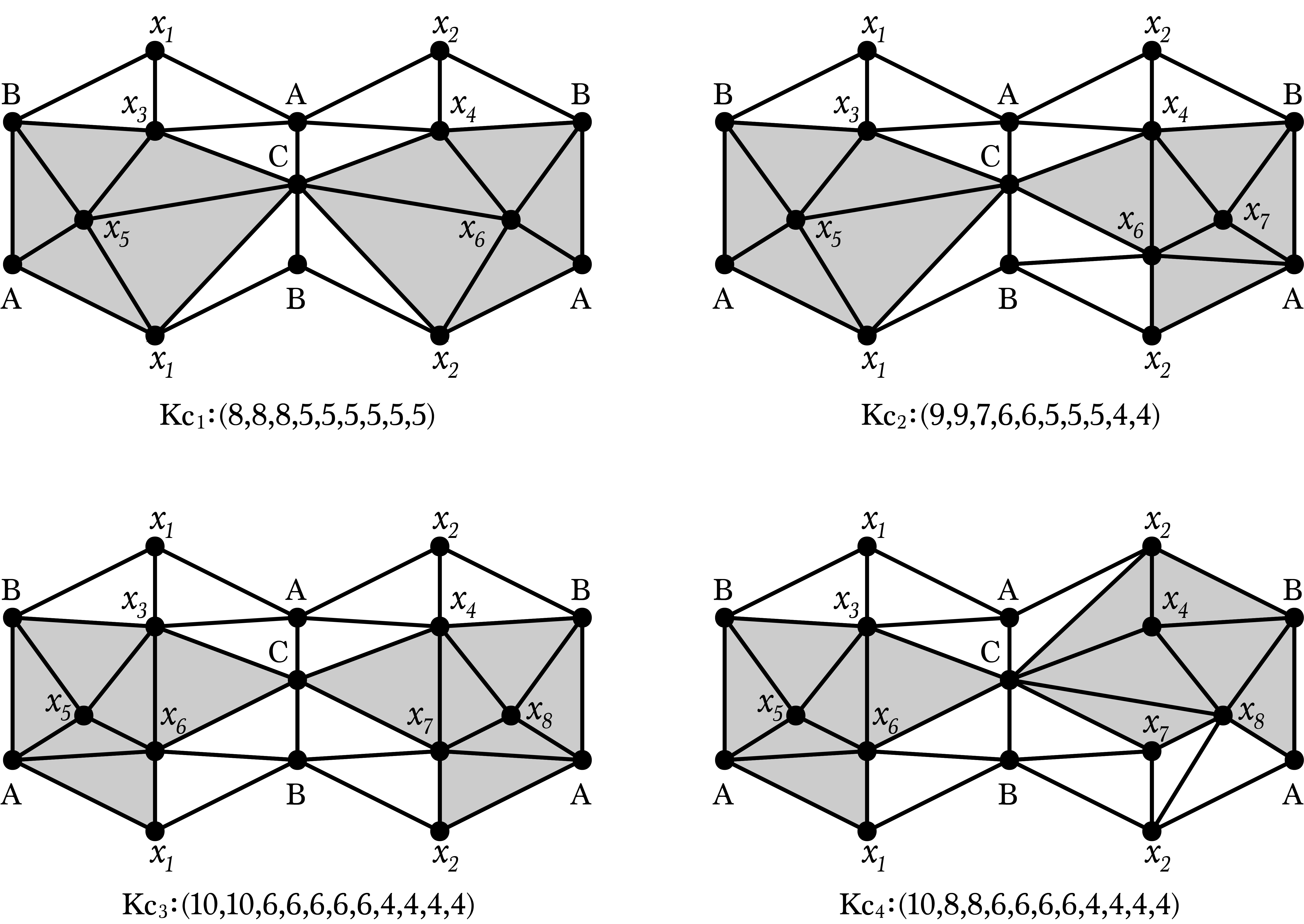}
\caption{Spanning pinched discs at $C$ in the cross-cap irreducible triangulations of $K$.}\label{fig:pinched_disc}
\end{figure}

\begin{figure}[H]
  \centering
\includegraphics[width=0.7\textwidth]{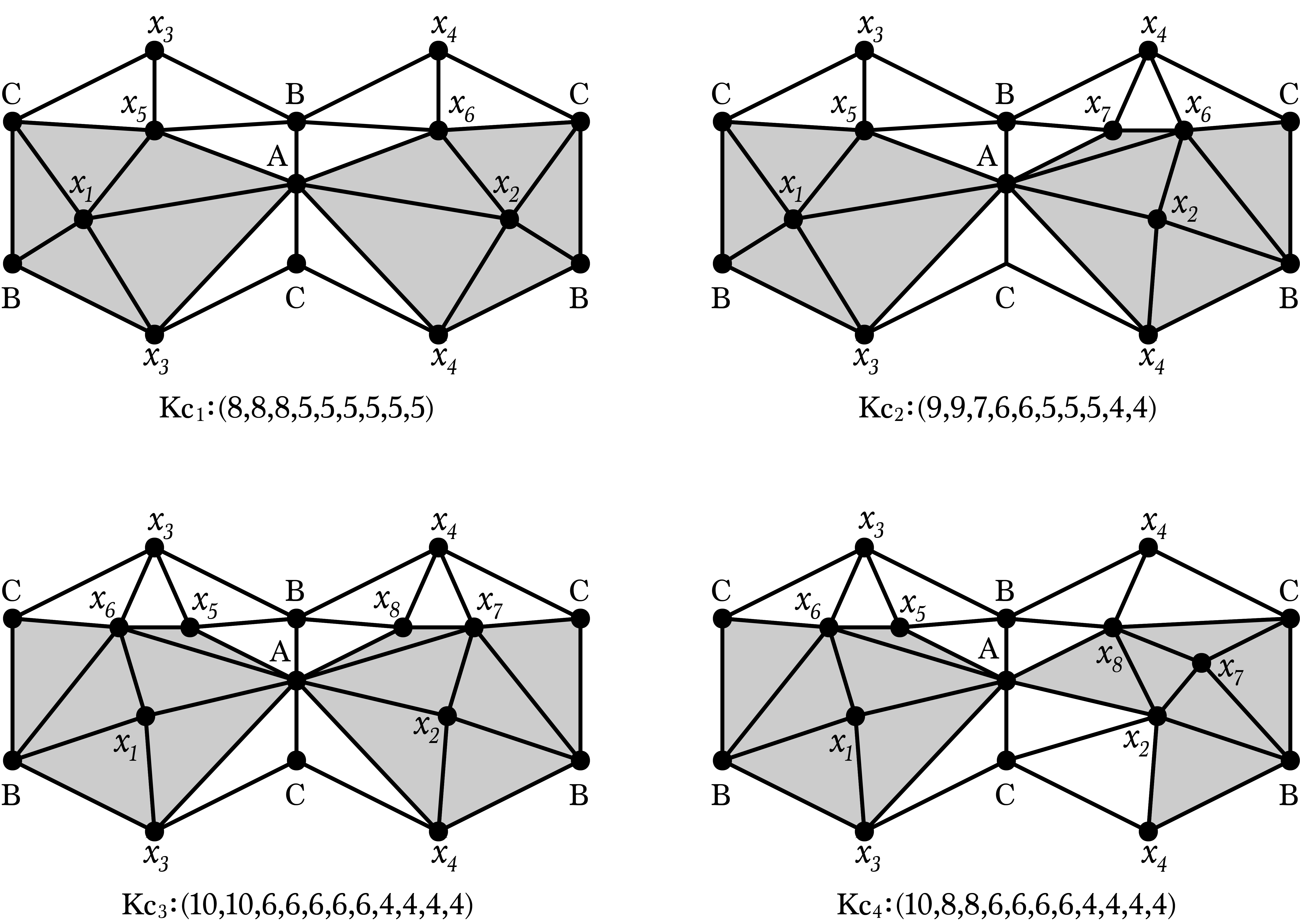}
\caption{Spanning pinched discs at $A$ in the cross-cap irreducible triangulations of $K$.}\label{fig:pinched_disc-A}
\end{figure}

\begin{figure}[H]
  \centering
\includegraphics[width=0.7\textwidth]{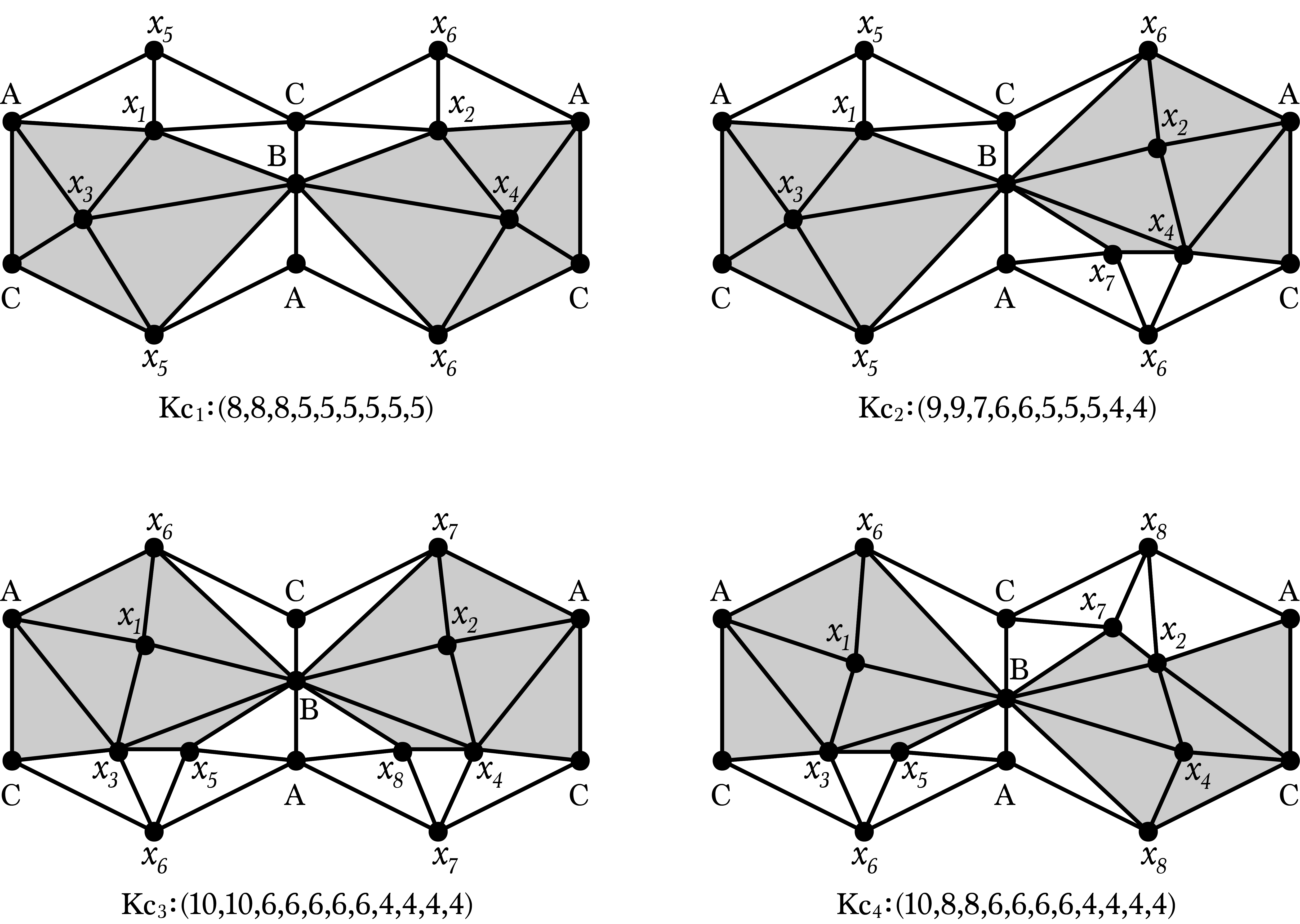}
\caption{Spanning pinched discs at $B$ in the cross-cap irreducible triangulations of $K$.}\label{fig:pinched_disc-B}
\end{figure}

After the first split we can resolve the singularity and choose $S'$ to be a spanning cylinder; see Figure~\ref{fig:resolve-singularity} for illustration.

\begin{figure}[H]
  \centering
\includegraphics[width=\textwidth]{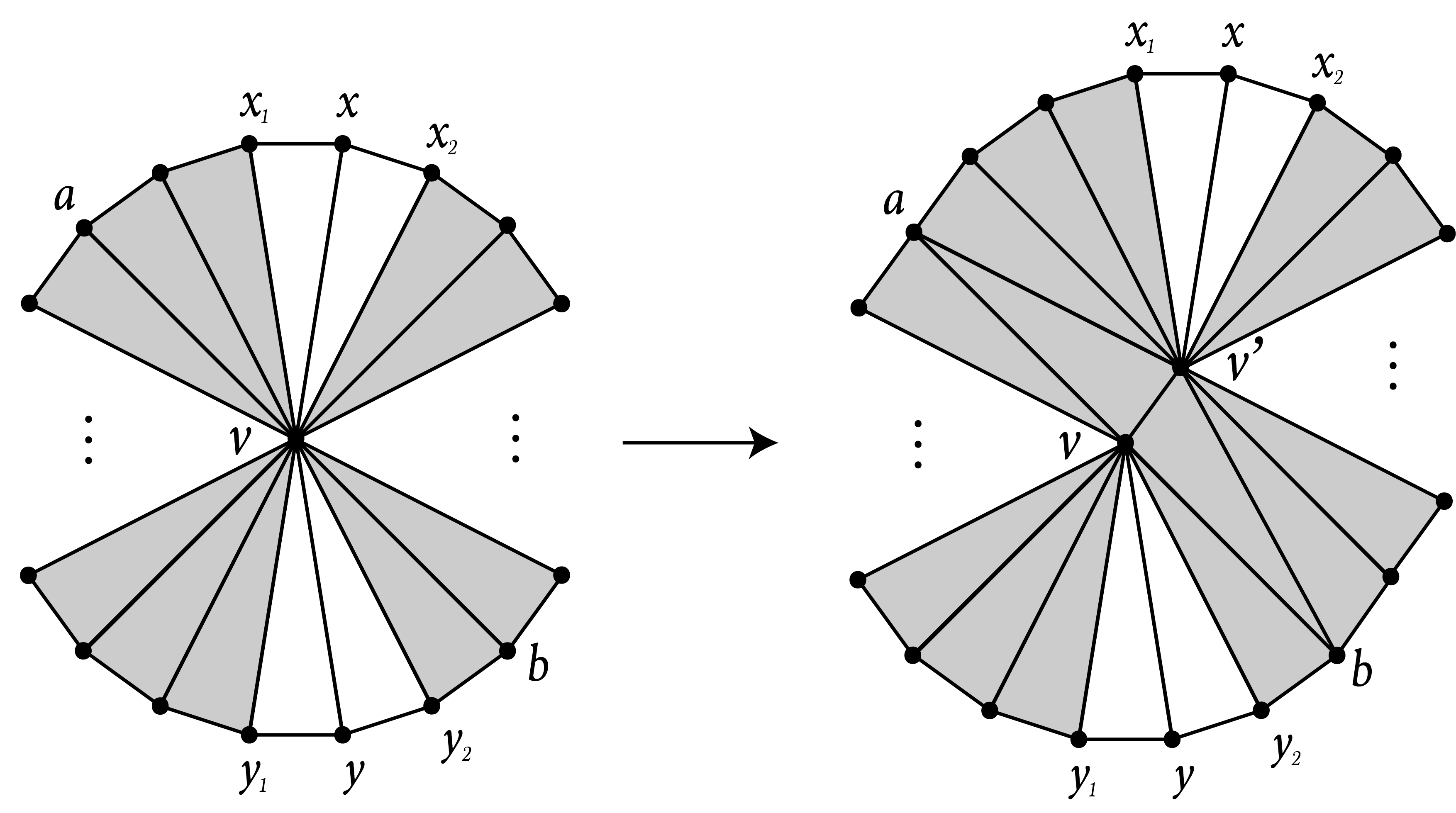}
\caption{Resolving a singularity: from a pinched disc to a cylinder.}\label{fig:resolve-singularity}
\end{figure}
Formally, in Proposition~\ref{prop:resolve_singularity} below we describe how to resolve a singularity in a spanning pinched surface $S$ under certain conditions; these conditions are met for the surfaces $S$ chosen in the irreducible cross-cap triangulations of the Klein bottle (see Figures~\ref{fig:pinched_disc},~\ref{fig:pinched_disc-A} and~\ref{fig:pinched_disc-B}).

The Extension Theorem~\ref{thm:extention} shows that further splits (after the first split and resolving the singularity) preserve admitting a spanning cylinder.
This completes the proof of the main Theorem~\ref{thm:Main}, modulo Propositions~\ref{prop:resolve_singularity} and~\ref{prop:ABCsplitsFirst}.
$\square$

Next we explain how to resolve a singularity.
Let $\Gamma$ be a $k$-cycle in the $1$-skeleton (graph) of $\Delta$. A vertex split on $\Delta$ is called \emph{$\Gamma$-elongating} if it induces a vertex split on $\Gamma$, thus, making it a $(k+1)$-cycle.

\begin{proposition}\label{prop:resolve_singularity}
Let $\Delta$ be a triangulated closed surface and let $S$ be vertex spanning
pinched surface with exactly one singular vertex $v$. Let $\Gamma\subseteq\Delta$ be a cycle, $v\in \Gamma$, such that $x$ and $y$ are the neighbors of $v$ in $\Gamma$. Suppose that $S$ contains all of the triangles in
the star of $v$ in $\Delta$ except for those which are incident with either of the edges $vx$ and $vy$, and that $S$ contains non of the edges in $\lk_v(\Delta)$ incident with either of $x$ and $y$. Then,

(1) Every $\Gamma$-elongating vertex split of $\Delta$ at $v$
induces a vertex split of $S$ at $v$ that yields an
extendible spanning simplicial surface $S'$ of the resulted complex $\Delta'$.

(2) If $S$ is a pinched disc then $S'$ from part (1) is a cylinder.
\end{proposition}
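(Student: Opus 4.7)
The plan is to construct $S'$ explicitly from $S$ and the data of the $\Gamma$-elongating split, then verify its properties one by one. I write $\lk_v(\Delta)=(w_1,\ldots,w_m)$ cyclically with $x=w_i$ and $y=w_j$; a $\Gamma$-elongating split of $\Delta$ at $v$ is specified by the two common neighbors $w_a,w_b$ of $v$ and the new vertex $v'$, chosen so that $x$ lies strictly inside the arc $[w_a,w_b]$ (the side kept by $v$) and $y$ strictly inside the complementary arc. I define $S'$ by deleting the star of $v$ in $S$, reinserting each sheet-triangle $vw_kw_{k+1}$ of $S$ as its image in $\Delta'$ (namely $vw_kw_{k+1}$ if $\{w_k,w_{k+1}\}\subseteq[w_a,w_b]$ and $v'w_kw_{k+1}$ otherwise), and adjoining the two bridge triangles $vv'w_a$ and $vv'w_b$.

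For Part~(1), the key computation is that $\lk_v(S')$ and $\lk_{v'}(S')$ are single paths. In $S$, $\lk_v(S)$ is the disjoint union of two arcs $P_1=(w_{i+1},\ldots,w_{j-1})$ and $P_2=(w_{j+1},\ldots,w_{i-1})$; by the $\Gamma$-elongating hypothesis, $w_b$ lies strictly inside $P_1$ and $w_a$ strictly inside $P_2$, so the $v$-portions $(w_{i+1},\ldots,w_b)$ and $(w_a,\ldots,w_{i-1})$, stitched by the two link-edges $v'w_a,v'w_b$ that the bridges contribute to $\lk_v$, concatenate into the single path $w_{i-1},\ldots,w_a,v',w_b,\ldots,w_{i+1}$ that realizes $\lk_v(S')$; symmetrically for $v'$. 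Hence $v$ and $v'$ are regular boundary vertices of $S'$; the links at $w_a$ and $w_b$ each acquire one new vertex ($v'$ or $v$) via the bridge edge $vv'$ and remain paths or cycles, while all other links are unchanged. Vertex-spanning follows from $V(S')=V(S)\cup\{v'\}=V(\Delta')$. For extendibility, triangles of $\Delta'$ disjoint from $\{v,v'\}$ are the same as those of $\Delta$ and inherit their edges in $S$; the relabeled sheet triangles and the two bridges are in $S'$ by construction; and the four triangles inherited from the excluded ones near $x$ and $y$---$vw_{i-1}x$, $vxw_{i+1}$, $v'w_{j-1}y$, $v'yw_{j+1}$---each contain one of the edges $vw_{i-1}$, $vw_{i+1}$, $v'w_{j-1}$, $v'w_{j+1}$, all of which lie in $S'$ as edges of a neighboring sheet triangle or of a bridge.

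For Part~(2), assume $S$ is a pinched disc, so $\chi(S)=0$. A straightforward count in the star region gives $\chi(S')-\chi(S)=1-3+2=0$ (one new vertex $v'$, net three new edges, two new bridge triangles), so $\chi(S')=0$, and by Part~(1) $S'$ is a connected surface with boundary. To identify it as a cylinder rather than a M\"obius band I would count boundary components: in $S$ the boundary is a figure-$8$ at $v$ with the four edges $vw_{i\pm1},vw_{j\pm1}$ meeting at $v$, and after the split the two edges $vw_{j\pm1}$ migrate to $v'w_{j\pm1}$ while $vw_{i\pm1}$ remain at $v$, with no bridge edge ending up on the boundary. Using the specific combinatorial structure of the spanning pinched discs in Figures~\ref{fig:pinched_disc}--\ref{fig:pinched_disc-B}, the two loops of the figure-$8$ pair up as an ``$x$-side'' loop using $\{vw_{i-1},vw_{i+1}\}$ and a ``$y$-side'' loop using $\{vw_{j-1},vw_{j+1}\}$, so they resolve into two disjoint cycles of $\partial S'$: one through $v$ and one through $v'$. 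With $\chi(S')=0$ and two boundary components, $S'$ is a cylinder.

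The subtlest step, and the one I would scrutinize most carefully, is verifying the ``$x$-side / $y$-side'' pairing of the figure-$8$ loops: the alternative interleaved pairing would produce a single boundary cycle in $\partial S'$ and hence a M\"obius band. The pairing that actually occurs reflects the specific construction of the spanning pinched disc in each cross-cap triangulation---its extension from the star of $v$ is transverse to $\Gamma$---and the $\Gamma$-elongating split is exactly the operation that separates these two halves into $v$ and $v'$.
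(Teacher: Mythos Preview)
Your argument for Part~(1) is essentially the paper's proof in different notation. Both of you write out $\lk_v(\Delta)$ explicitly, place the two common neighbors $a,b$ (your $w_a,w_b$) in the two arcs of $\lk_v(S)$ using the $\Gamma$-elongating hypothesis, read off that $\lk_v(S')$ and $\lk_{v'}(S')$ are single paths, and then verify extendibility by checking the four triangles at $x$ and at $y$ that are missing from $S'$; the paper records the relevant edges as $v'x_1,v'x_2,vy_2,vy_1$, exactly matching your $vw_{i\pm1},v'w_{j\pm1}$ up to swapping the roles of $x$ and $y$.

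For Part~(2) you take a genuinely different route. The paper argues topologically in one line: $S'$ is homeomorphic to $|S|\cup D$ for a small disc $D\subset|\Delta|$ centred at $v$, and ``for $S$ a pinched disc at $v$ this union is a cylinder.'' You instead compute $\chi(S')=\chi(S)=0$ (your count $1-3+2$ is correct) and then distinguish cylinder from M\"obius band by counting boundary components. Your approach is more explicit and in fact surfaces a point the paper's sentence leaves implicit: replacing the bowtie at $v$ by a disc attaches a $1$-handle to the desingularised disc $\tilde S$, and whether the result is a cylinder or a M\"obius band is governed by how the four boundary edges $vx_1,vx_2,vy_1,vy_2$ pair up into the two loops of the figure-$8$ that is $\partial S$. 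That pairing is not forced by the purely local hypotheses of the proposition; it is a feature of the specific spanning pinched discs exhibited in Figures~\ref{fig:pinched_disc}--\ref{fig:pinched_disc-B}, and both the paper's phrase ``this union is a cylinder'' and your appeal to those figures are relying on it. Your closing paragraph correctly isolates this as the one step needing the concrete construction rather than the abstract hypotheses.
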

\begin{proof}
Denote the cycle $\lk_v(\Delta)$ by $(x,x_2,\ldots,y_2,y,y_1,\ldots,x_1,x)$ in cyclic order.
By assumption, the edges in the intersection of $S$ with the cycle $\lk_v(\Delta)$ are exactly those in the two disjoint intervals $[x_2,y_2]$ and $[y_1,x_1]$. In particular, as $S$ is a pinched surface, at $v$, not containing $vx$ nor $vy$, the six vertices $x,x_1,x_2,y,y_1,y_2$ are distinct.

Denote by $a$ and $b$ the two common neighbors of $v$ and $v'$ in $\Delta'$. As the vertex split is $\Gamma$-elongating, w.l.o.g. the path $(y,v,v',x)$ is contained in $\Delta'$ and $a\in [y_1,x_1]$ and $b\in[x_2,y_2]$.

Then the induced vertex split of $S$ at $v$ yields $S'$ with the following triangles containing $v$ or $v'$: $vv'a,vv'b$, $vst$ for consecutive vertices (in $\lk_v(\Delta)$)  $s,t\in[y_1,a]\cup [b,y_2]$, and $v'st$ for consecutive vertices (in $\lk_v(\Delta)$)  $s,t\in[x_2,b]\cup [a,x_1]$.

Then $S'$ is indeed extendible, namely $S'$ contains an edge from every triangle of $\Delta'$: the only triangles in $\Delta'$ containing $v$ or $v'$ that are not in $S'$ are $v'x_1x, v'xx_2,vy_2y$ and $vyy_1$; they contain the following edges of $S'$ resp.: $v'x_1,v'x_2,vy_2$ and $vy_1$.
To complete the proof of part (1) we verify that $S'$ is a surface with boundary -- indeed, the link of $v'$ (resp. $v$) in $S'$ is the path $(x_2,\ldots,b,v,a,\ldots,x_1)$ (resp. $(y_1,\ldots,a,v',b,\ldots,y_2)$), where the dots $\ldots$ refer to the cyclic order in $\lk_v(\Delta)$.

For part (2) note that $S'$ is homeomorphic to $S$ union a small disc with center at $v$ (in the realization space $|\Delta|$); for $S$ a pinched disc at $v$ this union is a cylinder.
\end{proof}

\section{Rearranging vertex splits}\label{sec:rearrange}
As promised, the following proposition reduces the treatment of triangulations of the Klein bottle obtained from one of the cross-cap triangulations, to the case where the first split makes the missing triangle $ABC$ a $4$-cycle.
\begin{proposition}\label{prop:ABCsplitsFirst}
Let $\Delta$ be obtained from one of the four cross-cap triangulations of $K$ by a sequence of $t$ 
vertex splits such that the missing triangle $ABC$ survived all the splits except for the last ($t$-th) split, which elongated it to an induced $4$-cycle, denote it by $ABCD$.

If $t\ge 2$ then there exists an edge contraction in $\Delta$, whose restriction to $ABCD$ is trivial, namely, non of the edges in $ABCD$ was contracted.
\end{proposition}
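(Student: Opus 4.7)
The plan is to prove Proposition~\ref{prop:ABCsplitsFirst} by a rearrangement-of-splits argument: find, among the first $t-1$ splits, one whose associated split edge remains contractible in $\Delta=\Delta_t$ and avoids the $4$-cycle $ABCD$. Write the sequence of splits as $S_i:\Delta_{i-1}\to\Delta_i$ for $i=1,\ldots,t$, and by the symmetry of $A,B,C$ assume that the elongating split $S_t$ is performed at $C$, producing the new vertex $D$ with split edge $CD$; the induced $4$-cycle in $\Delta$ is then $A$--$B$--$C$--$D$--$A$.

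The natural candidate for the desired edge is $e=uu'$, the split edge of $S_{t-1}$, where $u\in V(\Delta_{t-2})$ is the vertex being split and $u'$ is the new vertex introduced by $S_{t-1}$. Since $u'$ appears only at stage $t-1$, it differs from $A,B,C$ (all in $V(\Delta_0)$) and from $D$ (introduced at stage $t$); consequently $e\notin\{AB,BC,CD,DA\}$ is automatic, and the only thing to check is that $e$ is contractible in $\Delta$, i.e.\ that the link condition $\lk_u(\Delta)\cap\lk_{u'}(\Delta)=\lk_{uu'}(\Delta)$ holds.

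In the easy case, when $S_{t-1}$ and $S_t$ have disjoint influence -- meaning $C\notin\{u,u'\}$ and $\{u,u'\}\cap\lk_C(\Delta_{t-1})=\emptyset$ -- the $t$-th split leaves $\lk_u$ and $\lk_{u'}$ untouched, so the link condition carries over from $\Delta_{t-1}$ to $\Delta$ and the two splits may be swapped, yielding the desired contraction at $e$.

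The main obstacle is the interacting case, when $u=C$ or one of $u,u'$ lies in $\lk_C(\Delta_{t-1})$. Here a local case analysis is needed, organized by the position of $u,u'$ relative to the partition $\lk_C(\Delta_{t-1})=[x_1,x_k]\cup[x_k,x_1]$ defining $S_t$: whether $u$ (resp. $u'$) lies strictly in the $C$-path, strictly in the $D$-path, at one of the interface vertices $x_1,x_k$, or coincides with $C$ itself, and also whether $C$ was one of the two triangle-forming vertices of $uu'$ in $\Delta_{t-1}$. The key observation is that $S_t$ can destroy the link condition of $e$ only by creating $D$ as a new common neighbor of $u$ and $u'$ that is not a triangle-forming vertex of $e$ in $\Delta$; this in turn forces both endpoints of $e$ to lie strictly inside the same path of the partition together with certain triangle-absorption constraints on $C$. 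In each subcase where the link condition survives we take $e$ (possibly after renaming $u\mapsto D$ when $u=C$ and the $u$-side of the split falls on the $D$-path); in the remaining subcases we fall back on an earlier split $S_i$ with $i<t-1$, which is available whenever $t\geq 3$. The base case $t=2$ is handled by direct inspection of the finite list of four cross-cap irreducible triangulations together with the finitely many possibilities for the one preceding split $S_1$.
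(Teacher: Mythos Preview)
Your candidate edge $e=uu'$, the split edge of $S_{t-1}$, is precisely the edge the paper uses, and your verification that $e$ misses all four edges of $ABCD$ is fine. The genuine gap is that you never establish that $e$ is contractible in $\Delta$. You sketch a case distinction and then, instead of completing it, posit two escape hatches: for $t\ge 3$ you ``fall back on an earlier split $S_i$'', and for $t=2$ you invoke ``direct inspection''. Neither is carried out, and the first is simply unjustified: there is no reason the split edge of some $S_i$ with $i<t-1$ should still satisfy the link condition in $\Delta$, since every later split can create empty triangles through it.

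More importantly, no fallback is needed: $e$ is \emph{always} contractible in $\Delta$, and the hypothesis that $S_t$ elongates $ABC$ is exactly what closes the argument. Suppose $e$ lies in an empty triangle $uu'w$ of $\Delta$. Since $e$ was contractible in $\Delta_{t-1}$, this empty triangle is created by $S_t$, so $w\in\{C,D\}$. Because the link condition held in $\Delta_{t-1}$, $u$ and $u'$ are \emph{consecutive} in $\lk_C(\Delta_{t-1})$ (the triangle $uu'C$ was a face there). If $u,u'$ both lie in one closed arc $[x_1,x_k]$ or $[x_k,x_1]$ but are not the pair $\{x_1,x_k\}$ itself, that face is carried to $uu'C$ or $uu'D$ in $\Delta$ and no empty triangle arises; so your claim that the bad case is ``both endpoints strictly inside the same path'' is backwards. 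The only surviving possibility is $\{u,u'\}=\{x_1,x_k\}$, and since $u,u'$ are consecutive this makes one of the open arcs $(x_1,x_k),(x_k,x_1)$ empty. But $S_t$ elongates $ABC$, which forces $A$ and $B$ to lie in distinct open arcs --- a contradiction. (When $u=C$, the same reasoning applies to whichever of $u'C$, $u'D$ is the image of $e$ in $\Delta$.) This works uniformly for all $t\ge 2$; no separate treatment of $t=2$ is required.
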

\begin{proof}
Let the $(t-1)$-th vertex split at $v$ introduce new vertex $v'$, changing a triangulation $\Delta''$ of $K$ to $\Delta'$, and let the $t$-th vertex split be at say $A$ (with $D=A'$, similarly for splits at $B$ or $C$; possibly $v=A$), changing $\Delta'$ to $\Delta$.
If the edge $v'v\in \Delta$ cannot be contracted, then $\Delta$ contains a missing triangle $v'vu$. However, this missing triangle was created by the $t$-th split, at $A$.

\textbf{Case $A\neq v$}.
Then in $\Delta'$ there exists the triangle $Av'v$, and $\Delta'$ splits at $A$ to create a missing triangle $A'v'v$ in $\Delta$ (recall our notation $D=A'$). Thus, $v$ and $v'$ are consecutive in $\lk_A(\Delta')$ and are the two common neighbors of $A$ and $A'$ in $\Delta$. However, as $ABC$ does not survive the $t$-th split, $v$ and $v'$ must separate $B$ and $C$ in the cycle $\lk_A(\Delta')$, a contradiction.

\textbf{Case $A=v$}. Then, similarly, there exists a triangle $Av'u \in \Delta'$, and for the $t$-th split at $A$, $v'$ and $u$ are the two common neighbors of $A$ and $A'$ in $\Delta$, contradicting that $v'$ and $u$ must separate $B$ and $C$ in the cycle $\lk_A(\Delta')$.
\end{proof}

Iterating the edge contractions guaranteed in Proposition~\ref{prop:ABCsplitsFirst} we would either reach a non cross-cap irreducible triangulation, or end up with a triangulation that is obtained from one of the cross-cap triangulations  via a single vertex split, that elongates $ABC$ to a $4$-cycle; as required to conclude the proof of the main Theorem~\ref{thm:Main}. $\square$

\begin{remark}
On a combinatorial level, edge contractions always \emph{do} commute; however, reordering them may \emph{not} preserve the topology of the complex.
\end{remark}
More formally, let us first set a notational convention: let $\Delta$ be a simplicial complex on the vertex set $V=\{v_{\{1\}},\ldots,v_{\{n\}}\}$, and $\Delta'$ is obtained from $\Delta$ by a sequence of edge contractions.
Along this sequence, when we contract the edge $v_Sv_T$ we name the ``merged" vertex by $v_{S\cup T}$ while the other vertices $v_P\neq v_S,v_T$ keep their names. Note that $S$ and $T$ are disjoint subsets of $[n]$.
\begin{observation}[Commutativity of edge contractions]\label{obs:edge_contractions_commute}
Under the above convention:

(0) A subset $\{v_{T_1},\ldots,v_{T_m}\}$ of vertices in $\Delta'$ is a face of $\Delta'$ iff there exist indices $i_j\in T_j$ such that $\{v_{i_1},\ldots,v_{i_m}\}$ is a face in $\Delta$. Thus,

(i) If each of $\Delta'$ and $\Delta''$ is obtained from $\Delta$ by some sequence of edge contractions, and the names of vertices are identical in $\Delta'$ and $\Delta''$ then $\Delta'=\Delta''$.

(ii) If $\Delta'$ is obtained from $\Delta$ by a sequence of edge contractions, and the edge $v_{\{i\}}v_{\{j\}}\in \Delta$ satisfies that $i,j\in T$ for some vertex $v_T\in \Delta'$, then there exists another sequence of edge contractions that starts with $\Delta$, ends with $\Delta'$, and
the last edge contracted is of the form $v_{I}v_{J}$ such that $i\in I$, $j\in J$ (and $T=I\cup J$).
\end{observation}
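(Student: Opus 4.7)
Part (0) is the structural statement from which (i) is immediate and (ii) follows by a constructive rearrangement. For (0), I would argue by induction on the length of the contraction sequence from $\Delta$ to $\Delta'$. The base case (empty sequence) is trivial since each $T_j$ is then forced to be a singleton. For the inductive step, write $\Delta\to\cdots\to\Delta_1\to\Delta'$, where the last contraction merges $v_S$ and $v_R$ (with $S,R$ disjoint) into $v_{S\cup R}$. By the definition of simplicial edge contraction, a set $F\subseteq V(\Delta')$ is a face of $\Delta'$ iff either $v_{S\cup R}\notin F$ and $F$ is a face of $\Delta_1$, or $v_{S\cup R}\in F$ and at least one of $(F\setminus\{v_{S\cup R}\})\cup\{v_S\}$ and $(F\setminus\{v_{S\cup R}\})\cup\{v_R\}$ is a face of $\Delta_1$. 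Composing this description with the characterization of $\Delta_1$-faces given by the induction hypothesis yields (0) for $\Delta'$. Part (i) is then immediate, since by (0) the face set of any complex obtained from $\Delta$ by contractions is determined by the partition of the merged singletons, which is exactly what the vertex names record.

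For part (ii), the plan is to exhibit an alternate contraction sequence from $\Delta$ to $\Delta'$ whose final step is of the desired form. Two preliminary observations are key. First, by part (0) applied to $\Delta'$, for every block $T_k$ in the partition the induced subgraph of the $1$-skeleton of $\Delta$ on $T_k$ is connected: each of the $|T_k|-1$ merges building $v_{T_k}$ in the original sequence consumes a distinct edge of $\Delta$ lying inside $T_k$, and one checks inductively that these edges form a spanning tree. Second, any edge of a connected graph can be extended to a spanning tree. Applying these to $T$, pick a spanning tree $\tau$ of the induced subgraph on $T$ that contains the edge $v_{\{i\}}v_{\{j\}}$, and let $I,J$ be the vertex sets of the two components of $\tau$ after deleting this edge, so that $i\in I$, $j\in J$, $I\cup J=T$. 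For every other block $T_k\neq T$, pick any spanning tree of its induced subgraph.

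Now execute contractions in the following order: the spanning-tree edges of each $T_k\neq T$ (in any order), then the spanning-tree edges of the $I$-subtree of $\tau$, then those of the $J$-subtree, and finally the edge $v_{\{i\}}v_{\{j\}}$ itself, which at that moment carries the label $v_I v_J$. At each step the edge being contracted is a genuine edge of the current complex by part (0), witnessed by the corresponding edge of $\Delta$, so every scheduled contraction is executable. The resulting complex shares its vertex partition with $\Delta'$, so by (i) it equals $\Delta'$, and the last contraction has the required form $v_I v_J$ with $i\in I$, $j\in J$, $I\cup J=T$. The main obstacle I anticipate is ensuring that intermediate merges within other blocks do not obstruct later contractions, but this is precisely what part (0) rules out: adjacency in every intermediate complex is controlled solely by the original adjacencies in $\Delta$, so each spanning-tree edge remains available at its scheduled moment.
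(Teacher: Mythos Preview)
Your argument is correct and matches the paper's approach for parts (0) and (i): both induct on the length of the contraction sequence for (0) and read (i) off immediately. For part (ii) the paper is much terser---it reduces via (i) to the $1$-skeleton and then simply invokes ``a basic fact on graph minors''---whereas you spell out that fact explicitly via spanning trees of the induced subgraphs on the blocks. Your version is a faithful unpacking of what the paper leaves implicit; the spanning-tree construction you give is exactly the standard proof of the graph-minor fact being cited, and your use of (0) to guarantee each scheduled contraction is available, together with (i) to certify the final complex, is the right way to close the loop.
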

\begin{proof}
(0) is clear for a single edge contraction; in that case at most one of the $T_l$s has size two and all others are singletons. For the general case we iterate, i.e. induct on the number of edge contractions in the sequence yielding from $\Delta$ to $\Delta'$.

(i) follows at once from (0).


(ii) can be proved directly, by showing how to modify the sequence of edge contractions from $\Delta$ to $\Delta'$. To avoid extra notation, note that by (i) it is enough to show the claim in (ii) for the one skeleta of $\Delta$ and $\Delta'$; for this case it is a basic fact on graph minors.
\end{proof}

However, if we care about preserving the topology, or even just the homology, edge contractions may not commute. For example, start with the boundary of a tetrahedron on the vertex set $\{1,2,3,4\}$, and iteratively at the $t$-th vertex split perform a stellar subdivision at the triangle $\{1,2,t+3\}$ by a new vertex $t+4$ (this splits vertex $t+3$ say, and introduces a new missing triangle $\{1,2,t+3\}$). The resulted complex is a stacked sphere. Pick $t\ge 5$. If we contract edges so that $4,5,\ldots, t+3$ are identified we obtain a $2$-sphere, the boundary of a tetrahedron. However, if we contract the edge $45$ first we obtain two $2$-spheres glued along an edge.


\section{Concluding remarks}\label{sec:conclude}
For higher genus $g$, the list of irreducible triangulations of $M_g$ is not known, so the approach taken here is not applicable. Adding the empty triangles in an irreducible triangulation (every edge is contained in an empty triangle!) gives more flexibility in finding a strongly connected spanning subcomplex. This approach may be useful towards Problem~\ref{prob:spanning}.

%
%
\textbf{Acknowledgements.}
An extended abstract to this paper will be presented at FPSAC2022~\cite{Simion-FPSAC}.
We thank the anonymous referees of FPSAC2022
for their helpful feedback.

\bibliographystyle{plain}
\bibliography{Spanning-Laman}
\end{document}